\newtheorem{thm}{Theorem}[section]
\newtheorem{lem}[thm]{Lemma}
\newtheorem{pro}[thm]{Proposition}
\newtheorem{cor}[thm]{Corollary}
\numberwithin{equation}{section}
\newcommand{\ch}{\mathrm{char}}
\newcommand{\ad}{\mathrm{ad}}
\newcommand{\id}{\mathrm{id}}
\newcommand{\Hom}{\mathrm{Hom}}
\newcommand{\End}{\mathrm{End}}
\newcommand{\Ker}{\mathrm{Ker}}
\newcommand{\im}{\mathrm{Im}}
\newcommand{\ann}{\mathrm{Ann}}
\newcommand{\leib}{\mathrm{Leib}}
\newcommand{\lie}{\mathrm{Lie}}
\newcommand{\sym}{\mathrm{sym}}
\newcommand{\HCE}{\mathrm{H}}
\newcommand{\CL}{\mathrm{CL}}
\newcommand{\dl}{\mathrm{d}}
\newcommand{\ZL}{\mathrm{ZL}}
\newcommand{\BL}{\mathrm{BL}}
\newcommand{\HL}{\mathrm{HL}}
\newcommand{\DL}{\mathrm{DL}}
\newcommand{\res}{\mathrm{res}}
\newcommand{\N}{\mathbb{N}}
\newcommand{\F}{\mathbb{F}}
\newcommand{\lf}{\mathfrak{L}}
\newcommand{\iif}{\mathfrak{I}}
\newcommand{\nf}{\mathfrak{N}}
\newcommand{\af}{\mathfrak{A}}
\newcommand{\kf}{\mathfrak{K}}
\newcommand{\hf}{\mathfrak{H}}
\newcommand{\mf}{\mathfrak{M}}
\newcommand{\gl}{\mathfrak{gl}}
\newcommand{\gf}{\mathfrak{g}}
\newcommand{\rf}{\mathfrak{R}}
\begin{document}


\title[Cohomology of solvable Leibniz algebras]{On the cohomology of solvable Leibniz algebras}

\author{J\"org Feldvoss}
\address{Department of Mathematics and Statistics, University of South Alabama,
Mobile, AL 36688-0002, USA}
\email{jfeldvoss@southalabama.edu}

\author{Friedrich Wagemann}
\address{Laboratoire de math\'ematiques Jean Leray, UMR 6629 du CNRS,
Universit\'e de Nantes, 2, rue de la Houssini\`ere, F-44322 Nantes Cedex 3,
France}
\email{wagemann@math.univ-nantes.fr}

\subjclass[2020]{Primary 17A32; Secondary 17B56}

\keywords{Leibniz cohomology, Chevalley-Eilenberg cohomology, Hochschild
cohomology, cohomological (non-)vanishing, semi-simple Leibniz algebra,
nilpotent Leibniz algebra, inner derivation, infinitesimal deformation, rigid
Leibniz algebra, supersolvable Leibniz algebra, solvable Leibniz algebra,
right faithful Leibniz bimodule, maximal subalgebra, Frattini subalgebra,
minimal ideal, right self-centralizing ideal}


\begin{abstract}
This paper is a sequel to \cite{FW}, where we mainly consider semi-simple
Leibniz algebras. It turns out that the analogue of the Hochschild-Serre spectral
sequence for Leibniz cohomology cannot be applied to many ideals, and therefore
this spectral sequence seems not to be applicable for computing the cohomology
of non-semi-simple Leibniz algebras. The main idea of the present paper is to
use similar tools as developed by Farnsteiner for Hochschild cohomology (see
\cite{F1} and \cite{F2}) to work around this. Unfortunately, it does not seem
to be possible to relate the cohomology of a Leibniz algebra directly to Hochschild
cohomology as is the case for Lie algebras, but all the desired results can be
obtained in a similar way. In particular, this enables us to generalize the vanishing
theorems of Dixmier and Barnes for nilpotent and (super)solvable Lie algebras
to Leibniz algebras. Moreover, we compute the cohomology of the one-dimensional
Lie algebra with values in an arbitrary Leibniz bimodule and show that it is periodic
with period two. As a consequence, we prove the Leibniz analogue of a non-vanishing
theorem of Dixmier. Although not needed in full for the aforementioned results,
we prove a Fitting lemma for Leibniz bimodules that might be useful elsewhere.
\end{abstract}


\date{April 4, 2023}
          
\maketitle


\section*{Introduction}


In a previous paper \cite{FW} we started to study the cohomology of (left) Leibniz
algebras. One of our main results is the second Whitehead lemma for finite-dimensional
semi-simple Leibniz algebras in characteristic zero (see \cite[Theorem 4.3]{FW}). More
generally, we systematically adapted Pirashvili's spectral sequences (see \cite{P}) to
cohomology and general Leibniz bimodules. One of these spectral sequences (see
\cite[Theorem 3.4 or Corollary 3.5]{FW}) is the Leibniz analogue of the Hochschild-Serre
spectral sequence. It is clear from the $E_2$-term that this spectral sequence is not
useful for the computation of the cohomology of Leibniz algebras with abelian ideals
different from the Leibniz kernel. 

In this paper we consider methods developed in order to prove vanishing theorems for
Hochschild cohomology. This enables us to extend several well-known vanishing theorems
from the cohomology of solvable Lie algebras to Leibniz algebras. In fact, in \cite{F1}
and \cite{F2}, Farnsteiner's aim is (among other results) to unify the proofs of Dixmier's
vanishing theorem for nilpotent Lie algebras (see \cite[Th\'eor\`eme 1]{D}) and Barnes'
vanishing theorems for (super)solvable Lie algebras (see \cite[Theorems~2 and 3]{B1}).
The crucial idea to prove the vanishing of cohomology is to employ a generalization of
Casimir elements. The latter were used by Whitehead to prove the vanishing of the
cohomology of semi-simple Lie algebras in characteristic zero. Farnsteiner showed
that similar ideas can be employed to prove cohomological vanishing theorems for Lie
algebras with non-zero abelian ideals and Lie algebras in prime characteristic. More
precisely, one needs the existence of certain elements that act invertibly on the
cohomology, while the algebra acts trivially on its cohomology due to certain Cartan
type relations. The conclusion is then that the cohomology must vanish. Since it does
not seem to be possible to express Leibniz cohomology in terms of Hochschild cohomology,
Farnsteiner's results cannot be applied directly, but they have to be adapted to Leibniz
algebras. It turns out that this is possible (see Section~3). In the first section we recall
some definitions and prove several basic results that will be useful later in the paper.
Section 2 is devoted to the Fitting decomposition for Leibniz bimodules. Note that in
the proof of one of the main results in Section~3, namely, Theorem \ref{fittinghh},
we only need that the Fitting-0-component of a Leibniz bimodule is a sub-bimodule.
Since this might be useful elsewhere, we also prove the analogue of Fitting's lemma
for Leibniz algebras.

The main results of the present paper are contained in Section 4. In 1955, Dixmier
\cite{D} proved (non-)vanishing theorems for the Chevalley-Eilenberg cohomology
of finite-dimensional nilpotent Lie algebras. The vanishing behavior depends on the
coefficients having or not having a trivial composition factor. Later, Barnes \cite{B1}
gave a different proof of Dixmier's vanishing theorem (see the proof of \cite[Lemma~3]{B1})
using the Hochschild-Serre spectral sequence and induction on the dimension of the
Lie algebra. On the other hand, Dixmier's proof of his non-vanishing theorem (see
\cite[Th\'eor\`eme 2]{D}) relies on a long exact sequence related to the kernel
of the restriction map from the cohomology of the nilpotent Lie algebra to the
cohomology of an ideal of codimension one. Our overall goal in the present paper
is to generalize these results to (left) Leibniz algebras. We prove analogues of
Dixmier's vanishing theorem for nilpotent Lie algebras (see Theorem \ref{dixmier})
and Barnes' vanishing theorems for (super)solvable Lie algebras (see Theorems
\ref{barnes} and  \ref{vansolv}). Another part of Section 4 is then devoted to
establish a Leibniz analogue of Dixmier's non-vanishing theorem for nilpotent Lie
algebras. We proceed as close as possible to Dixmier's proof, but there are
several obstacles. The base step follows from the computation of the cohomology
of the one-dimensional Leibniz algebra with values in an arbitrary Leibniz bimodule.
Similar to the cohomology of finite cyclic groups, in positive degrees this cohomology
is periodic with period 2 (see Theorem~\ref{ab}). This allows us then to proceed
by induction on the dimension of the nilpotent Leibniz algebra, but our cohomological
non-vanishing theorem (see Theorem \ref{nonvannilp}) is weaker than what one
would expect from Dixmier's result (see Proposition \ref{nontriv} and Examples A
and C). Nevertheless, a consequence of Theorem~\ref{nonvannilp} is that the
cohomology of a nilpotent Leibniz algebra with trivial coefficients does not vanish
in any degree (see Corollary \ref{nonvantriv}). Moreover, the sufficient condition
for the non-vanishing of the adjoint cohomology of a nilpotent Leibniz algebra in
every degree (see Corollary \ref{adj}), which one obtains as a special case of
Theorem \ref{nonvannilp}, is easy to verify, and it is always satisfied for a nilpotent
Lie algebra (see Corollary \ref{adjlie}).

As an application of the vanishing theorems in Section 4 we extend some
structure theorems for (super)solvable Lie algebras (see \cite[Section 3]{B1})
to Leibniz algebras.

In this paper we will follow the notation used in \cite{F} and \cite{FW}. An algebra
without any specification will be a vector space with a bilinear multiplication that
not necessarily satisfies any other identity. Ideals will always be two-sided ideals
if not explicitly stated otherwise. All vector spaces
and algebras are defined over an arbitrary field which is only explicitly mentioned
when some additional assumptions on the ground field are made or this enhances
the understanding of the reader. In particular, all tensor products are over the
relevant ground field and will be denoted by $\otimes$. For a subset $X$ of a
vector space $V$ over a field $\F$ we let $\langle X\rangle_\F$ be the subspace
of $V$ spanned by $X$. We will denote the space of linear transformations from
an $\F$-vector space $V$ to an $\F$-vector space $W$ by $\Hom_\F(V,W)$. In
particular, $\End_\F(V):=\Hom_\F(V,V)$ is the space of linear operators on $V$,
and $V^*:=\Hom_\F(V,\F)$ is the space of linear forms on $V$. Finally, the identity
function on a set $X$ will be denoted by $\id_X$, the set $\{1,2,\dots\}$ of positive
integers will be denoted by $\N$, and the set $\N\cup\{0\}$ of non-negative integers
will be denoted by $\N_0$.


\section{Preliminaries}\label{prelim}


In this section we recall some definitions and collect several results that will be useful
in the remainder of the paper.

A {\em left Leibniz algebra\/} is an algebra $\lf$ such that every left multiplication
operator $L_x:\lf\to\lf$, $y\mapsto xy$ is a derivation. This is equivalent to the identity
$$x(yz)=(xy)z+y(xz)$$
for all $x,y,z\in\lf$, which in turn is equivalent to the identity
$$(xy)z=x(yz)-y(xz)$$
for all $x,y,z\in\lf$. There is a similar definition of a {\em right Leibniz algebra\/}, but
in this paper we will only consider left Leibniz algebras which often will
just be called Leibniz algebras unless this might make matters easier to understand
for the reader. 

Note that every Lie algebra is a left and a right Leibniz algebra. On the other hand, every
Leibniz algebra has an important ideal, its Leibniz kernel, that measures how much
the Leibniz algebra deviates from being a Lie algebra. Namely, let $\lf$ be a Leibniz
algebra over a field $\F$. Then $$\leib(\lf):=\langle x^2\mid x\in\lf\rangle_\mathbb{F}$$
is called the {\em Leibniz kernel\/} of $\lf$. The Leibniz kernel $\leib(\lf)$ is an abelian
ideal of $\lf$, and $\leib(\lf)\ne\lf$ whenever $\lf\ne 0$ (see \cite[Proposition 2.20]{F}).
Moreover, $\lf$ is a Lie algebra if, and only if, $\leib(\lf)=0$.

By definition of the Leibniz kernel, $\lf_\lie:=\lf/\leib(\lf)$ is a Lie algebra which we call
the {\em canonical Lie algebra\/} associated to $\lf$. In fact, the Leibniz kernel is the
smallest ideal such that the corresponding factor algebra is a Lie algebra (see
\cite[Proposition 2.22]{F}).

Next, we will briefly discuss left modules and bimodules of left Leibniz algebras. Let
$\lf$ be a left Leibniz algebra over a field $\F$. A {\em left $\lf$-module\/} is a vector
space $M$ over $\F$ with an $\F$-bilinear left $\lf$-action $\lf\times M\to M$, $(x,m)
\mapsto x\cdot m$ such that $$(xy)\cdot m=x\cdot(y\cdot m)-y\cdot(x\cdot m)$$ is
satisfied for every $m\in M$ and all $x,y\in\lf$.

Moreover, every left $\lf$-module $M$ gives rise to a homomorphism $\lambda:\lf\to
\gl(M)$ of left Leibniz algebras, defined by $\lambda_x(m):=x\cdot m$, and vice versa.
We call $\lambda$ the {\em left representation\/} of $\lf$ associated to $M$.

The correct concept of a module for a left Leibniz algebra $\lf$ is the notion of a Leibniz
bimodule (see \cite[Section 3]{F} for the motivation behind this definition of a bimodule
for a left Leibniz algebra). An {\em $\lf$-bimodule\/} is a vector space $M$ with an
$\F$-bilinear left $\lf$-action and an $\F$-bilinear right $\lf$-action that satisfy the
following compatibility conditions:
\begin{enumerate}
\item[(LLM)] \hspace{2.5cm}$(xy)\cdot m=x\cdot(y\cdot m)-y\cdot(x\cdot m)$
\item[(LML)] \hspace{2.5cm}$(x\cdot m)\cdot y=x\cdot(m\cdot y)-m\cdot(xy)$
\item[(MLL)] \hspace{2.5cm}$(m\cdot x)\cdot y=m\cdot(xy)-x\cdot(m\cdot y)$
\end{enumerate}
for every $m\in M$ and all $x,y\in\lf$.

It is an immediate consequence of (LLM) that every Leibniz bimodule is a left Leibniz
module.

On the other hand, a pair $(\lambda,\rho)$ of linear transformations $\lambda:\lf\to
\End_\F(V)$ and $\rho:\lf\to\End_\F(V)$ is called a {\em representation\/} of $\lf$
on the $\F$-vector space $V$ if the following conditions are satisfied:
\begin{equation}\label{LLMrep}
\lambda_{xy}=\lambda_x\circ\lambda_y-\lambda_y\circ\lambda_x
\end{equation}
\begin{equation}\label{LMLrep}
\rho_{xy}=\lambda_x\circ\rho_y-\rho_y\circ\lambda_x
\end{equation}
\begin{equation}\label{MLLrep}
\rho_y\circ\rho_x=-\rho_y\circ\lambda_x
\end{equation}
for any elements $x,y\in\lf$. Note that (LML) and (MLL) are equivalent to (\ref{LMLrep})
and (\ref{MLLrep}).

Then every $\lf$-bimodule $M$ gives rise to a representation $(\lambda,\rho)$ of $\lf$
on $M$ via $\lambda_x(m):=x\cdot m$ and $\rho_x(m):=m\cdot x$. Conversely, every
representation $(\lambda,\rho)$ of $\lf$ on the vector space $M$ defines an $\lf$-bimodule
structure on $M$ via $x\cdot m:=\lambda_x(m)$ and $m\cdot x:=\rho_x(m)$.

By virtue of \cite[Lemma 3.3]{F}, every left $\lf$-module is an $\lf_\lie$-module in a
natural way, and vice versa. Consequently, many properties of left Leibniz modules
follow from the corresponding properties of modules for the canonical Lie algebra.

The usual definitions of the notions of {\em sub(bi)module\/}, {\em irreducibility\/},
{\em complete reducibility\/}, {\em composition series\/}, {\em homomorphism\/},
{\em isomorphism\/}, etc., hold for left Leibniz modules and Leibniz bimodules. (Note
that by definition an irreducible Leibniz (bi)module is always non-zero.)

Let $\lf$ be a left Leibniz algebra over a field $\F$, and let $M$ be an $\lf$-bimodule.
Then $M$ is said  to be {\em symmetric\/} if $m\cdot x=-x\cdot m$ for every $x\in
\lf$ and every $m\in M$, and $M$ is said  to be {\em anti-symmetric\/} if $m\cdot
x=0$ for every $x\in\lf$ and every $m\in M$. Moreover, an $\lf$-bimodule $M$ is
called {\em trivial\/} if $x\cdot m=0=m\cdot x$ for every $x\in\lf$ and every $m\in
M$. Note that an $\lf$-bimodule $M$ is trivial if, and only if, $M$ is symmetric and
anti-symmetric. We call
$$M_0:=\langle x\cdot m+m\cdot x\mid x\in\lf,m\in M\rangle_\F$$
the {\em anti-symmetric kernel\/} of $M$. It is well known that $M_0$ is an
anti-symmetric $\lf$-subbimodule of $M$ such that $M_\sym:=M/M_0$ is symmetric
(see \cite[Proposition~3.12 and Proposition 3.13]{F}).

Recall that every left $\lf$-module $M$ of a left Leibniz algebra $\lf$ determines
a unique symmetric $\lf$-bimodule structure on $M$ by defining $m\cdot x:=
-x\cdot m$ for every element $m\in M$ and every element $x\in\lf$ (see
\cite[Proposition 3.15\,(b)]{F}). We will denote this symmetric $\lf$-bimodule
by $M_s$. Similarly, every left $\lf$-module $M$ with trivial right action is an
anti-symmetric $\lf$-bimodule (see \cite[Proposition 3.15\,(a)]{F}) which will
be denoted by $M_a$.

Recall that for a subset $S$ of a left Leibniz algebra $\lf$ the {\em space of
right $S$-invariants\/} of an $\lf$-bimodule $M$ is
$$M^S:=\{m\in M\mid\forall\,s\in S:m\cdot s=0\}\,.$$
In particular, we have that $M^\emptyset:=M$.

Our first result is an obvious generalization of \cite[Lemma 1.2]{FW}.

\begin{lem}\label{inv}
Let $\lf$ be a left Leibniz algebra, let $\iif$ be a left ideal of $\lf$, and let $M$ be an
$\lf$-bimodule. Then $M^\iif$ is an $\lf$-subbimodule of $M$.
\end{lem}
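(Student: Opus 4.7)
The plan is to verify directly that $M^\iif$ is stable under both the left and the right action of $\lf$, using the bimodule axioms (LML) and (MLL) together with the fact that $\iif$ is a left ideal.

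First I would check right-stability. Given $m \in M^\iif$, $x \in \lf$, and $i \in \iif$, I apply (MLL) to get
\[
(m\cdot x)\cdot i = m\cdot(xi) - x\cdot(m\cdot i).
\]
Since $\iif$ is a left ideal, $xi \in \iif$, so the first term vanishes by the definition of $M^\iif$; the second term vanishes because $m\cdot i = 0$. Hence $m\cdot x \in M^\iif$.

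Next I would check left-stability. For $m \in M^\iif$, $x \in \lf$, $i \in \iif$, axiom (LML) gives
\[
(x\cdot m)\cdot i = x\cdot(m\cdot i) - m\cdot(xi).
\]
Both terms on the right vanish: $m \cdot i = 0$ by hypothesis, and $m\cdot(xi)=0$ because $xi\in\iif$. Thus $x\cdot m \in M^\iif$.

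Since $M^\iif$ is clearly a linear subspace of $M$, the two computations above show it is an $\lf$-subbimodule. There is no real obstacle here; the only subtlety is noticing that one needs $\iif$ to be a \emph{left} ideal in order to ensure $xi \in \iif$ in both computations, which is exactly the hypothesis.
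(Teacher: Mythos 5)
Your proof is correct and is exactly the argument the paper gives (the paper merely states that (LML) yields left-invariance and (MLL) yields right-invariance, which is precisely the two computations you carry out). No issues.
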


\begin{proof}
It follows from (LML) that $M^\iif$ is invariant under the left $\lf$-action on $M$, and it
follows from (MLL) that $M^\iif$ is invariant under the right $\lf$-action on $M$.
\end{proof}

Let $\lf$ be a left Leibniz algebra over a field $\F$, and let $M$ be an $\lf$-bimodule
with associated representation $(\lambda,\rho)$. We say that $\ann_\lf^\ell(M):=
\Ker(\lambda)$ is the {\em left annihilator\/} of $M$. Similarly, $\ann_\lf^r(M):=
\Ker(\rho)$ is the {\em right annihilator\/} of $M$, and $\ann_\lf(M):=\ann_\lf^\ell
(M)\cap\ann_\lf^r(M)$ is called the {\em annihilator\/} of $M$.

It is clear from the definition of $M^\lf$ that an $\lf$-bimodule $M$ is anti-symmetric
if, and only if, $M^\lf=M$. The following generalization of \cite[Lemma 1.1]{FW}
will be useful later in this paper:

\begin{lem}\label{sym}
Let $\lf$ be a left Leibniz algebra, let $S$ be a subset of $\lf$, and let $M$ be an
$\lf$-bimodule such that $M^S=0$. Then $M$ is symmetric. In particular, $\leib
(\lf)\subseteq\ann_\lf(M)$.
\end{lem}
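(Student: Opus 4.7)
The approach is to show that every element of the form $x \cdot m + m \cdot x$ automatically lies in $M^\lf$, hence in $M^S$, and is therefore zero. The key observation is that the right-hand sides of axioms (LML) and (MLL) are negatives of one another: summing
\begin{equation*}
(x \cdot m) \cdot y = x \cdot (m \cdot y) - m \cdot (xy) \quad \text{and} \quad (m \cdot x) \cdot y = m \cdot (xy) - x \cdot (m \cdot y)
\end{equation*}
yields $(x \cdot m + m \cdot x) \cdot y = 0$ for all $x, y \in \lf$ and $m \in M$, with no dependence on whatever $xy$ does. In particular, $x \cdot m + m \cdot x \in M^\lf$, and since $S \subseteq \lf$ gives $M^\lf \subseteq M^S$, the hypothesis $M^S = 0$ forces $m \cdot x = -x \cdot m$. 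So $M$ is symmetric.

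For the second assertion, I would apply (LLM) with $y = x$ to get $x^2 \cdot m = x \cdot (x \cdot m) - x \cdot (x \cdot m) = 0$, so $x^2 \in \ann_\lf^\ell(M)$ for every $x \in \lf$. The symmetry established above then gives $m \cdot x^2 = -x^2 \cdot m = 0$, so $x^2 \in \ann_\lf^r(M)$ as well, hence $x^2 \in \ann_\lf(M)$. Since $\leib(\lf) = \langle x^2 \mid x \in \lf\rangle_\F$ and the annihilator contains the span of all such squares, we conclude $\leib(\lf) \subseteq \ann_\lf(M)$.

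There is no substantial obstacle here: the entire argument hinges on the elementary observation that (LML) and (MLL) together give a cancellation on the right, and this is really the correct place to notice that anti-symmetric elements always live in $M^\lf$. The lemma is essentially a one-line consequence of the Leibniz bimodule axioms, and the ``in particular'' statement is then automatic from the fact that $\lambda_{x^2} = 0$ on any bimodule.
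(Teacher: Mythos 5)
Your proof is correct and follows essentially the same route as the paper: the paper observes that the anti-symmetric kernel $M_0$ satisfies $M_0=M_0^S\subseteq M^S=0$ (citing the known fact that $M_0$ is anti-symmetric) and then invokes a cited lemma for $\leib(\lf)\subseteq\ann_\lf(M)$, whereas you simply unpack both citations into the explicit one-line computations from (LML)+(MLL) and from (LLM) with $y=x$. The only difference is that your version is self-contained rather than relying on the external references.
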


\begin{proof}
Since $M_0$ is anti-symmetric, it follows from the hypothesis that $$M_0=
M_0^S\subseteq M^S=0\,.$$ Hence we obtain from the definition of $M_0$
that $M$ is symmetric. The second part is then an immediate consequence
of \cite[Lemma 3.10]{F}.
\end{proof}

We continue with two results that will be needed at the end  of this section.

\begin{lem}\label{triv}
Let $\lf$ be a left Leibniz algebra, and let $M$ be an $\lf$-bimodule. Then
$M^\lf=0$ if, and only if, $M$ is symmetric, and $M$ does not contain a
non-zero trivial $\lf$-subbimodule.
\end{lem}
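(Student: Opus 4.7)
The plan is to prove both implications directly, using Lemmas \ref{inv} and \ref{sym} to organize the argument.

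For the forward direction, I would assume $M^\lf=0$ and apply Lemma \ref{sym} with $S=\lf$ to conclude immediately that $M$ is symmetric. For the second assertion, I would observe that any trivial $\lf$-subbimodule $N\subseteq M$ satisfies $n\cdot x=0$ for every $n\in N$ and every $x\in\lf$, so $N\subseteq M^\lf=0$; hence $M$ contains no non-zero trivial subbimodule.

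For the reverse direction, I would assume that $M$ is symmetric and contains no non-zero trivial $\lf$-subbimodule. By Lemma \ref{inv}, $M^\lf$ is an $\lf$-subbimodule of $M$. By construction, the right action of $\lf$ on $M^\lf$ is zero; and since $M$ (and therefore $M^\lf$) is symmetric, the identity $x\cdot m=-m\cdot x$ forces the left action on $M^\lf$ to vanish as well. Thus $M^\lf$ is a trivial $\lf$-subbimodule, and the hypothesis forces $M^\lf=0$.

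No step here is really an obstacle; the only subtle point is recognizing that once $M$ is known to be symmetric, the conditions ``anti-symmetric'' and ``trivial'' coincide on any subbimodule, which is exactly what converts information about right invariants into information about trivial subbimodules. The argument is essentially a one-line application of Lemma \ref{sym} in one direction and a two-line combination of Lemma \ref{inv} with the symmetry of $M$ in the other direction.
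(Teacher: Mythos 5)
Your proof is correct and follows essentially the same route as the paper: Lemma \ref{sym} for the forward direction, and Lemma \ref{inv} plus the symmetry of $M$ to show $M^\lf$ is a trivial subbimodule for the converse (which the paper phrases contrapositively, but the content is identical).
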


\begin{proof}
One implication is an immediate consequence of Lemma \ref{sym}. Conversely,
suppose that $M^\lf\ne 0$. If $M$ is not symmetric, then there is nothing to
prove. On the other hand, if $M$ is symmetric, then it follows from Lemma
\ref{inv} that $M^\lf$ is a non-zero trivial $\lf$-subbimodule of $M$. 
\end{proof}

Note that Lemma \ref{triv} generalizes \cite[Corollary 1.3]{FW} from irreducible
to arbitrary bimodules. Moreover, for $S=\lf$ one implication of Lemma \ref{triv}
is the converse of Lemma \ref{sym}.

Lemma \ref{1dim} generalizes Lemma \ref{triv} further, but needs a
stronger hypothesis on the ground field and seems to hold only for
finite-dimensional bimodules. Recall that $\lf\lf:=\langle xy\mid x,y\in
\lf\rangle_\F$ is the {\em derived subalgebra\/} of a left Leibniz
algebra $\lf$ (see \cite{S}).

\begin{lem}\label{1dim}
Let $\lf$ be a left Leibniz algebra over an algebraically closed field $\F$, and
let $M$ be a finite-dimensional $\lf$-bimodule. Then $M^{\lf\lf}=0$ if, and
only if, $M$ is symmetric, and $M$ does not contain a one-dimensional
$\lf$-subbimodule.
\end{lem}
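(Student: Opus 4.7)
The plan is to prove both implications via the following key observation: for any one-dimensional $\lf$-subbimodule $N \subseteq M$ with representation $(\lambda_N,\rho_N)$, since $\End_\F(N) \cong \F$ is commutative, relation (\ref{LLMrep}) forces $\lambda_{N,xy} = [\lambda_{N,x},\lambda_{N,y}] = 0$, and then (\ref{LMLrep}) forces $\rho_{N,xy} = [\lambda_{N,x},\rho_{N,y}] = 0$, for all $x,y \in \lf$. In other words, $\lf\lf$ automatically annihilates every one-dimensional subbimodule from both sides.

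For the ``only if'' direction I would apply Lemma \ref{sym} with $S := \lf\lf$ to conclude immediately that $M$ is symmetric. The observation above then shows that any one-dimensional subbimodule $N$ of $M$ would lie inside $M^{\lf\lf} = 0$, which is absurd, so no such $N$ can exist.

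For the ``if'' direction I would first note that $\lf\lf$ is a two-sided ideal of $\lf$ (closure under right multiplication is clear, and closure under left multiplication follows from the Leibniz identity $(ab)c = a(bc) - b(ac)$), so by Lemma \ref{inv} the subspace $M^{\lf\lf}$ is an $\lf$-subbimodule of $M$. The strategy is then to exhibit a one-dimensional subbimodule inside $M^{\lf\lf}$ whenever $M^{\lf\lf} \neq 0$, contradicting the hypothesis. Since $M$ is symmetric, $\lf\lf$ annihilates $M^{\lf\lf}$ from the left as well as from the right, so on $M^{\lf\lf}$ the operators $\{\lambda_x\}_{x \in \lf}$ commute pairwise: $[\lambda_x,\lambda_y] = \lambda_{xy} = 0$ there. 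Finite-dimensionality of $M^{\lf\lf}$ together with algebraic closedness of $\F$ then yields a common eigenvector $0 \neq m \in M^{\lf\lf}$, and symmetry upgrades $\F m$ to a one-dimensional $\lf$-subbimodule.

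The main obstacle is the converse direction, which is also the reason for the strengthened hypotheses relative to Lemma \ref{triv}: symmetry is needed twice (first to collapse the left action of $\lf\lf$ on $M^{\lf\lf}$, and then to turn a left eigenvector into a full subbimodule), and algebraic closedness together with finite-dimensionality enter precisely at the step that produces the common eigenvector.
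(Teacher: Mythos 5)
Your proof is correct and follows essentially the same route as the paper: Lemma \ref{sym} gives symmetry in the forward direction, a one-dimensional subbimodule is necessarily annihilated by $\lf\lf$, and in the converse $M^{\lf\lf}$ becomes a module for the abelian Lie algebra $\lf/\lf\lf$, where algebraic closedness and finite-dimensionality produce a one-dimensional (sub)bimodule. Your common-eigenvector phrasing is just the paper's ``irreducible submodule of an abelian Lie algebra is one-dimensional'' step in different words.
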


\begin{proof}
Assume first that $M^{\lf\lf}=0$. By virtue of Lemma \ref{sym}, we have
that $M$ is symmetric. Suppose now that $M$ contains a one-dimensional
$\lf$-subbimodule $\F m_0$. Then there exists a linear form $\mu\in\lf^*$
on $\lf$ such that $x\cdot m_0=\mu(x)m_0$ for every element $x\in\lf$,
and thus we obtain from (LLM) that
$$(xy)\cdot m_0=x\cdot(y\cdot m_0)-y\cdot(x\cdot m_0)=\mu(x)\mu(y)m_0
-\mu(x)\mu(y)m_0=0$$
for any elements $x,y\in\lf$. Hence $0\ne m_0\in M^{\lf\lf}$ which is a
contradiction.

On the other hand, assume that $M$ is symmetric and does not contain
a one-dimensional $\lf$-subbimodule. Furthemore, suppose that $N:=
M^{\lf\lf}\ne 0$. Then $N$ is a finite-dimensional $(\lf/\lf\lf)$-bimodule.
Since $\dim_\F N<\infty$, we conclude that $N$ contains an irreducible
left $(\lf/\lf\lf)$-submodule $U$. As $\lf/\lf\lf$ is an abelian Lie algebra
and the ground field $\F$ is algebraically closed, we obtain that $\dim_\F
U=1$. But then $U_s$ is also a one-dimensional $\lf$-subbimodule
of $M$ which contradicts the hypothesis.
\end{proof}

The last two results of this section will be important in Section \ref{solv}.

\begin{pro}\label{nontriv}
Let $\lf$ be a left Leibniz algebra, and let $M$ be a finite-dimensional
$\lf$-bimodule such that every composition factor of $M$ is non-trivial.
Then $M^\lf=M_0$, and if in addition $M$ is symmetric, then $M^\lf=0$.
\end{pro}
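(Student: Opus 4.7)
The plan is to pass to the symmetric quotient $M_\sym = M/M_0$, apply Lemma \ref{triv} there, and combine this with the elementary fact that $M_0 \subseteq M^\lf$.

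Since $M_0$ is anti-symmetric by \cite[Proposition 3.12]{F}, its right action is zero and so $M_0 \subseteq M^\lf$ is automatic. For the reverse inclusion, I would work with the quotient $M_\sym = M/M_0$, which is symmetric by construction. Because $M$ is finite-dimensional, one can choose a composition series of $M$ passing through $M_0$; its factors above $M_0$ then constitute a composition series of $M_\sym$ whose factors form a subset of the composition factors of $M$. In particular, every composition factor of $M_\sym$ is non-trivial.

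Next, I claim that $M_\sym$ contains no non-zero trivial $\lf$-subbimodule. Indeed, any such subbimodule would contain a one-dimensional subspace on which both $\lf$-actions vanish, yielding an irreducible trivial $\lf$-subbimodule of $M_\sym$; extending a composition series through it would then produce a trivial composition factor, a contradiction. Lemma \ref{triv} applied to $M_\sym$ now gives $(M_\sym)^\lf = 0$. Hence, for $m \in M^\lf$, the image $\bar m \in M_\sym$ satisfies $\bar m \cdot x = 0$ for every $x \in \lf$, which forces $\bar m = 0$, i.e.\ $m \in M_0$. This establishes $M^\lf = M_0$.

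For the final assertion, if $M$ is itself symmetric then $x \cdot m + m \cdot x = 0$ for every $x \in \lf$ and $m \in M$, so $M_0 = 0$ by the definition of the anti-symmetric kernel, and the first part yields $M^\lf = 0$. There is no real obstacle in the argument: the entire content is repackaged into Lemma \ref{triv}, and the only point demanding any care is the observation that the composition factors of $M_\sym$ arise as a subset of those of $M$, which follows at once from Jordan--H\"older applied to a composition series refining $0 \subseteq M_0 \subseteq M$.
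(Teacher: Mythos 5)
Your proof is correct and takes essentially the same route as the paper's: both reduce to the symmetric quotient $M_\sym$ via the short exact sequence $0\to M_0\to M\to M_\sym\to 0$, apply Lemma \ref{triv} to that quotient, and use the anti-symmetry of $M_0$ for the inclusion $M_0\subseteq M^\lf$. The only differences are cosmetic: the paper packages the conclusion as the degree-zero part of the long exact cohomology sequence, whereas you chase elements directly and spell out the (correct, and in the paper implicit) Jordan--H\"older observation that the composition factors of $M_\sym$ occur among those of $M$.
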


\begin{proof}
Firstly, for symmetric $\lf$-bimodules the assertion is an immediate
consequence of Lemma \ref{triv}.

Next, if $M$ is arbitrary, then in the short exact sequence
$$0\to M_0\to M\to M_\sym\to 0$$
the first term is anti-symmetric and the third term is symmetric
(see \cite[Propositions~3.12 and 3.13]{F}). Consequently, an
application of the long exact cohomology sequence in conjunction
with the statement for the symmetric case yields the exact sequence
$$0\to M_0=M_0^\lf\hookrightarrow M^\lf\to M_\sym^\lf=0\,,$$
which then implies that $M^\lf=M_0$.
\end{proof}

The next example shows that the converse of Proposition \ref{nontriv}
is not true.
\vspace{.3cm}

\noindent {\bf Example A.} Let $\lf:=\F e$ be the one-dimensional Lie
algebra. Consider the matrices
$$A:=\left[\begin{array}{ccc}0 & 1 & 0\\0 & 0 & 1\\0 & 0 & 0\end{array}\right]$$
and
$$B:=\left[\begin{array}{ccc}0 & 0 & 1\\0 & 0 & 0\\0 & 0 & 0\end{array}\right]\,.$$
Then the vector space $M:=\F^3$ is a Leibniz $\lf$-bimodule via $\lambda_e(m):=
Am$ and $\rho_e(m):=Bm$ for any column vector $m\in M$ because the matrices
$A$ and $B$ satisfy the identities $AB=BA$ and $B^2=-BA$.
Note that we have
$$M^\lf=\Ker(\rho_e)=\langle\left[\begin{array}{c}1\\0\\0\end{array}\right],
\left[\begin{array}{c}0\\1\\0\end{array}\right]\rangle_\F$$
and
$$M_0=\im(\lambda_e+\rho_e)=\langle\left[\begin{array}{c}1\\0\\0\end{array}
\right],\left[\begin{array}{c}1\\1\\0\end{array}\right]\rangle_\F\,,$$
which imply that $M^\lf=M_0$, but $M$ contains the trivial $\lf$-subbimodule
$$\Ker(\lambda_e)\cap\Ker(\rho_e)=\langle\left[\begin{array}{c}1\\0\\0
\end{array}\right]\rangle_\F\,.$$
\vspace{.3cm}

Finally, a proof very similar to the one for Proposition \ref{nontriv}, in which
one uses Lemma \ref{1dim} instead of Lemma \ref{triv}, shows:

\begin{pro}\label{non1dim}
Let $\lf$ be a left Leibniz algebra over an algebraically closed field $\F$,
and let $M$ be a finite-dimensional $\lf$-bimodule such that no composition
factor of $M$ is one-dimensional. Then $M^{\lf\lf}=M_0$, and if in addition
$M$ is symmetric, then $M^{\lf\lf}=0$.
\end{pro}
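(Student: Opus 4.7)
The plan is to mirror the proof of Proposition~\ref{nontriv} line by line, substituting Lemma~\ref{1dim} for Lemma~\ref{triv} and $\lf\lf$ for $\lf$ in the appropriate places. The argument splits into the symmetric case and the reduction of the general case to it.

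First I would handle the symmetric case: assume $M$ is symmetric and that no composition factor of $M$ is one-dimensional, and show $M^{\lf\lf}=0$. By Lemma~\ref{1dim}, it suffices to verify that $M$ contains no one-dimensional $\lf$-subbimodule. But any one-dimensional subbimodule is automatically irreducible (it admits no proper non-zero subbimodule), so it can be placed at the bottom of a composition series of $M$ and would thereby appear as a one-dimensional composition factor, contradicting the hypothesis. This is the step that requires a tiny bit of care but is completely routine.

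Next I would reduce the general case via the short exact sequence
\[
0\to M_0\to M\to M_\sym\to 0,
\]
where the outer terms are anti-symmetric and symmetric, respectively, by \cite[Propositions~3.12 and 3.13]{F}. Since composition factors of a subquotient of $M$ are composition factors of $M$, no composition factor of $M_\sym$ is one-dimensional, and the symmetric case just proved gives $M_\sym^{\lf\lf}=0$. On the other hand, $M_0$ is anti-symmetric, so $M_0\cdot y=0$ for every $y\in\lf$, and a fortiori for $y\in\lf\lf$, so $M_0=M_0^{\lf\lf}$.

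Finally, applying the left-exact functor $(-)^{\lf\lf}$ (as in the closing display in the proof of Proposition~\ref{nontriv}) to the short exact sequence yields
\[
0\to M_0=M_0^{\lf\lf}\hookrightarrow M^{\lf\lf}\to M_\sym^{\lf\lf}=0,
\]
whence $M^{\lf\lf}=M_0$. If in addition $M$ is symmetric, then $M_0=0$ and so $M^{\lf\lf}=0$. The only non-trivial bookkeeping item is the composition-factor argument in the symmetric case; everything else is directly parallel to Proposition~\ref{nontriv} and uses no ingredient beyond Lemma~\ref{1dim} and the definition of $M_0$.
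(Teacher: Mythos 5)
Your proposal is correct and follows exactly the route the paper intends: the authors state that Proposition~\ref{non1dim} is proved ``very similarly'' to Proposition~\ref{nontriv}, using Lemma~\ref{1dim} in place of Lemma~\ref{triv}, which is precisely what you do, including the correct observation that a one-dimensional subbimodule would be an irreducible subbimodule and hence a forbidden composition factor. The reduction via the short exact sequence $0\to M_0\to M\to M_\sym\to 0$ and the left exactness of $(-)^{\lf\lf}$ matches the paper's argument.
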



\section{Fitting decomposition of a Leibniz bimodule}\label{fitting}


Let $V$ be a vector space over a field $\F$, and let $T\in\End_\F(V)$ be
a linear operator on $V$. Then $T$ is called {\em locally nilpotent\/} if
for every vector $v\in V$ there exists a positive integer $n(v)$ such that
$T^{n(v)}(v)=0$.

Moreover, we recall {\em Fitting's lemma for linear operators\/} which
asserts that for every linear operator $T$ on a finite-dimensional vector
space $V$ one has
$$V=V_0(T)\oplus V_1(T)$$
such that $T_{\vert V_0(T)}$ is nilpotent and $T_{\vert V_1(T)}$ is
invertible, where
$$V_0(T):=\bigcup_{n\in\N}\Ker(T^n)$$
and
$$V_1(T):=\bigcap_{n\in\N}\im(T^n)$$
are $T$-invariant subspaces of $V$ (see \cite[Section 4 of Chapter II]{J}).
In \cite[Theorem~4 of Chapter II]{J} Fitting's lemma is generalized to
nilpotent Lie algebras of linear operators. For our purposes we will need
an even slightly more general version of Fitting's lemma which we will
prove at the end of this section. In order to be able do so, we show
the following two results (see \cite[Lemma 1 of Chapter II]{J} and
\cite[Lemma~4.6]{F2} for Lie algebra versions of Lemma \ref{fitting0}).
The only new insight is that the two identities (\ref{LLMrep}) and
(\ref{LMLrep}) are sufficient to carry out the proofs.

\begin{lem}\label{fitting0}
Let $\lf$ be a left Leibniz algebra, and let $M$ be a left $\lf$-module with
associated left representation $\lambda:\lf\to\gl(M)$. If $a\in\lf$ is an
element such that the left multiplication operator $L_a:\lf\to\lf$, $x\mapsto
ax$ is locally nilpotent, then $M_0(\lambda_a)$ is an $\lf$-subbimodule
of $M$.
\end{lem}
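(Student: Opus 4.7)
My plan is to show invariance of $M_0(\lambda_a)=\bigcup_{n\in\N}\Ker(\lambda_a^n)$ separately under the left action $\lambda_x$ and the right action $\rho_x$ for every $x\in\lf$. The key technical device will be a Leibniz-style binomial expansion for iterated operators, which follows from the fact that, as a consequence of (\ref{LLMrep}) and (\ref{LMLrep}), the commutator $[\lambda_a,\,\cdot\,]$ in $\gl(M)$ sends $\lambda_x$ to $\lambda_{L_a(x)}$ and $\rho_x$ to $\rho_{L_a(x)}$.

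First I would establish, by induction on $k\ge 0$ using Pascal's rule, the identity
\[
\lambda_a^{k}\circ\lambda_x=\sum_{j=0}^{k}\binom{k}{j}\,\lambda_{L_a^j(x)}\circ\lambda_a^{k-j},
\]
whose base case $k=1$ is just a rewriting of (\ref{LLMrep}) as $\lambda_a\lambda_x=\lambda_x\lambda_a+\lambda_{ax}$. The inductive step merges the two binomial sums produced by commuting one more factor $\lambda_a$ past each summand. An identical argument, but invoking (\ref{LMLrep}) at the base step in the form $\lambda_a\rho_x=\rho_x\lambda_a+\rho_{ax}$, will yield the companion formula
\[
\lambda_a^{k}\circ\rho_x=\sum_{j=0}^{k}\binom{k}{j}\,\rho_{L_a^j(x)}\circ\lambda_a^{k-j}.
\]

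With these expansions in hand, given $m\in M_0(\lambda_a)$ with $\lambda_a^{N}(m)=0$ and any $x\in\lf$, the local nilpotence of $L_a$ on $\lf$ provides an integer $K$ with $L_a^{K}(x)=0$. Choosing any $k\ge N+K-1$, every summand in the formulas above vanishes: when $j\ge K$ the operator $\lambda_{L_a^j(x)}$ (respectively $\rho_{L_a^j(x)}$) is zero, while when $j<K$ one has $k-j\ge N$, so $\lambda_a^{k-j}(m)=0$. Hence $\lambda_x(m),\rho_x(m)\in M_0(\lambda_a)$, which proves that $M_0(\lambda_a)$ is stable under both actions and is therefore an $\lf$-subbimodule of $M$.

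The new content flagged by the paragraph preceding the lemma is that only the two representation identities (\ref{LLMrep}) and (\ref{LMLrep}) are needed; (\ref{MLLrep}) plays no role, which is precisely why the statement requires nothing about the right action beyond what is used to define $\rho$. The one mild obstacle will be arranging the bookkeeping so that a single Leibniz-style induction produces both binomial formulas uniformly, but there are no genuinely hard steps.
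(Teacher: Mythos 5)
Your proposal is correct and follows essentially the same route as the paper: the two binomial expansions you derive are exactly the paper's identities (\ref{ll}) and (\ref{lr}) (with indices relabeled), proved by the same induction from (\ref{LLMrep}) and (\ref{LMLrep}), and the concluding vanishing argument with $k\ge N+K-1$ is the paper's argument with exponent $2n$ for $n=\max\{N,K\}$. Nothing further is needed.
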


\begin{proof}
Let $x$ and $y$ be arbitrary elements of $\lf$. Using (\ref{LLMrep}) and
(\ref{LMLrep}) one can prove by induction on $n$ that
\begin{equation}\label{ll}
\lambda_x^n\circ\lambda_y=\sum_{k=0}^n{n\choose k}\lambda_{L_x^k(y)}
\circ\lambda_x^{n-k}
\end{equation}
and
\begin{equation}\label{lr}
\lambda_x^n\circ\rho_y=\sum_{k=0}^n{n\choose k}\rho_{L_x^k(y)}\circ
\lambda_x^{n-k}
\end{equation}
for every positive integer $n$.

Now let $m\in M_0(\lambda_a)$ and $x\in\lf$ be arbitrary. Since $m\in M_0
(\lambda_a)$ and $L_a$ is locally nilpotent, there exists a positive integer $n$
such that $\lambda_a^n(m)=0$ and $L_a^n(x)=0$.

Firstly, we obtain from (\ref{ll}) that
\begin{eqnarray*}
\lambda_a^{2n}(x\cdot m) & = & (\lambda_a^{2n}\circ\lambda_x)(m)\\
& = & \sum_{k=0}^{2n}{2n\choose k}\left(\lambda_{L_a^k(x)}\circ\lambda_a^{2n-k}
\right)(m)\\
& = & \sum_{k=0}^{n-1}{2n\choose k}\left(\lambda_{L_a^k(x)}\circ\lambda_a^{2n-k}
\right)(m)\\
& = & 0\,,
\end{eqnarray*}
which shows that $x\cdot m\in M_0(\lambda_a)$.

Next, we obtain from (\ref{lr}) that
\begin{eqnarray*}
\lambda_a^{2n}(m\cdot x) & = & (\lambda_a^{2n}\circ\rho_x)(m)\\
& = & \sum_{k=0}^{2n}{2n\choose k}\left(\rho_{L_a^k(x)}\circ\lambda_a^{2n-k}
\right)(m)\\
& = & \sum_{k=0}^{n-1}{2n\choose k}\left(\rho_{L_a^k(x)}\circ\lambda_a^{2n-k}
\right)(m)\\
& = & 0\,,
\end{eqnarray*}
which shows that $m\cdot x\in M_0(\lambda_a)$.
\end{proof}

It should be mentioned that we will only need Lemma \ref{fitting0} in the
remainder of the paper. But Theorem \ref{fitting} below might be useful
when one studies other aspects of Leibniz bimodules.

\begin{lem}\label{fitting1}
Let $\lf$ be a left Leibniz algebra, and let $M$ be a finite-dimensional left
$\lf$-module with associated left representation $\lambda:\lf\to\gl(M)$. If
$a\in\lf$ is an element such that the left multiplication operator $L_a:\lf
\to\lf$, $x\mapsto ax$ is locally nilpotent, then $M_1(\lambda_a)$ is an
$\lf$-subbimodule of $M$.
\end{lem}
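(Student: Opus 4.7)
The plan is to adapt the proof of Lemma \ref{fitting0} by pushing powers of $\lambda_a$ to the \emph{left} of $\lambda_x$ and $\rho_x$, rather than to the right as in (\ref{ll}) and (\ref{lr}). The starting point is that (\ref{LLMrep}) and (\ref{LMLrep}) applied to the pair $(a,x)$ yield the commutation relations $\lambda_x\circ\lambda_a=\lambda_a\circ\lambda_x-\lambda_{L_a(x)}$ and $\rho_x\circ\lambda_a=\lambda_a\circ\rho_x-\rho_{L_a(x)}$. Iterating these by induction on $n$, with Pascal's rule absorbing the binomial coefficients in the usual way, I would establish the signed mirror identities
\begin{equation*}
\lambda_x\circ\lambda_a^n=\sum_{k=0}^n(-1)^k\binom{n}{k}\lambda_a^{n-k}\circ\lambda_{L_a^k(x)}
\end{equation*}
and
\begin{equation*}
\rho_x\circ\lambda_a^n=\sum_{k=0}^n(-1)^k\binom{n}{k}\lambda_a^{n-k}\circ\rho_{L_a^k(x)}.
\end{equation*}

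With these formulas in hand, fix $m\in M_1(\lambda_a)$ and $x\in\lf$; the goal is to show that $x\cdot m\in\im(\lambda_a^p)$ for every $p\in\N$, and likewise for $m\cdot x$. Since $L_a$ is locally nilpotent, choose $r\geq 1$ with $L_a^r(x)=0$. Because $M$ is finite-dimensional, the descending chain $\im(\lambda_a^n)$ stabilizes at $M_1(\lambda_a)$, so choosing $n$ large enough (in particular $n\geq p+r$) also ensures $m\in\im(\lambda_a^n)$; write $m=\lambda_a^n(m')$ for some $m'\in M$. Plugging into the first displayed identity, the summands with $k\geq r$ vanish outright, while the surviving summands (indexed by $0\leq k\leq r-1$) lie in $\im(\lambda_a^{n-k})\subseteq\im(\lambda_a^p)$ because $n-k\geq p+1$. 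Hence $x\cdot m\in M_1(\lambda_a)$, and the verbatim argument with the second identity yields $m\cdot x\in M_1(\lambda_a)$.

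The only real obstacle I anticipate is setting up the signed commutation formulas and keeping the bookkeeping clean during the inductive step; after that, the truncation argument powered by local nilpotence of $L_a$ runs exactly parallel to the one in Lemma \ref{fitting0}, and the finite-dimensionality of $M$ is used only to guarantee that elements of $M_1(\lambda_a)$ may be written as $\lambda_a^n$-images for arbitrarily large $n$.
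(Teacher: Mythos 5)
Your proposal is correct and follows essentially the same route as the paper: your signed commutation identities are exactly the paper's identities (\ref{rl}) and (\ref{rr}) after the reindexing $k\mapsto n-k$, and the truncation via local nilpotence of $L_a$ combined with writing $m$ as a high $\lambda_a$-power image is the same argument. The only cosmetic difference is that you show $x\cdot m$ and $m\cdot x$ lie in $\im(\lambda_a^p)$ for every $p$ directly, whereas the paper fixes the stabilization index $t$ of the chain of images and lands in $\im(\lambda_a^t)=M_1(\lambda_a)$ in one step.
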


\begin{proof}
Let $x$ and $y$ be arbitrary elements of $\lf$. Using (\ref{LLMrep}) and
(\ref{LMLrep}) one can prove by induction on $n$ that
\begin{equation}\label{rl}
\lambda_y\circ\lambda_x^n=\sum_{k=0}^n(-1)^{n-k}{n\choose k}
\lambda_x^k\circ\lambda_{L_x^{n-k}(y)}
\end{equation}
and
\begin{equation}\label{rr}
\rho_y\circ\lambda_x^n=\sum_{k=0}^n(-1)^{n-k}{n\choose k}
\lambda_x^k\circ\rho_{L_x^{n-k}(y)}
\end{equation}
for every positive integer $n$.

Let $r$ be the smallest positive integer such that
$$\im(\lambda_a^r)=\im(\lambda_a^{r+1})=\cdots=M_1(\lambda_a)$$
and let $s$ be the smallest positive integer such that
$$\Ker(\lambda_a^s)=\Ker(\lambda_a^{s+1})=\cdots=M_0(\lambda_a)\,,$$
and set $t:=\max\{r,s\}$ (see \cite[Section 4 of Chapter II]{J}). 

Now let $m\in M_1(\lambda_a)$ and $x\in\lf$ be arbitrary. Since $L_a$ is
locally nilpotent, there exists a positive integer $n$ such that $L_a^n(x)=0$.
Moreover, there exists an element $m_0\in M$ such that $m=\lambda_a^{
t+n-1}(m_0)$. Note that the hypothesis $\dim_\F M<\infty$ implies that the
integers $r$ and $s$ always exist.

Firstly, we obtain from (\ref{rl}) that
\begin{eqnarray*}
x\cdot m & = & x\cdot\lambda_a^{t+n-1}(m_0)\\
& = & (\lambda_x\circ\lambda_a^{t+n-1})(m_0)\\
& = & \sum_{k=0}^{t+n-1}(-1)^{t+n-1-k}{t+n-1\choose k}\left(\lambda_a^k
\circ\lambda_{L_a^{t+n-1-k}(x)}\right)(m_0)\\
& = & \sum_{k=t}^{t+n-1}(-1)^{t+n-1-k}{t+n-1\choose k}\lambda_a^k
\left(\lambda_{L_a^{t+n-1-k}(x)}(m_0)\right)\in\im(\lambda_a^t)=M_1
(\lambda_a)\,.
\end{eqnarray*}

Next, we obtain from (\ref{rr}) that
\begin{eqnarray*}
m\cdot x & = & \lambda_a^{t+n-1}(m_0)\cdot x\\
& = & (\rho_x\circ\lambda_a^{t+n-1})(m_0)\\
& = & \sum_{k=0}^{t+n-1}(-1)^{t+n-1-k}{t+n-1\choose k}\left(\lambda_a^k
\circ\rho_{L_a^{t+n-1-k}(x)}\right)(m_0)\\
& = & \sum_{k=t}^{t+n-1}(-1)^{t+n-1-k}{t+n-1\choose k}\lambda_a^k
\left(\rho_{L_a^{t+n-1-k}(x)}(m_0)\right)\in\im(\lambda_a^t)=M_1
(\lambda_a)\,,
\end{eqnarray*}
which completes the proof.
\end{proof}

By abuse of language we write
$$M_0(S):=\bigcap_{s\in S}M_0(\lambda_s)$$
and
$$M_1(S):=\sum_{s\in S}M_1(\lambda_s)\,.$$

The main result of this section is the following {\em Fitting lemma for Leibniz
algebras\/}. Note that our proof of part (d) will reduce the statement
to the corresponding statement of \cite[Theorem 4 of Chapter II]{J}.

\begin{thm}\label{fitting}
Let $\lf$ be a left Leibniz algebra over a field $\F$, and let $M$ be a $\lf$-bimodule
with associated representation $(\lambda,\rho)$. If $S$ is a subset of $\lf$ such
that the left multiplication operator $L_s:\lf\to\lf$, $x\mapsto sx$ is locally nilpotent
for every element $s\in S$, then the following statements hold:
\begin{enumerate}
\item[{\rm (a)}] $M_0(S)$ is an $\lf$-subbimodule of $M$.
\item[{\rm (b)}] Every element of $S$ acts locally nilpotently on
                         $M_0(S)$ from the left and from the right.
\end{enumerate}
Moreover, if $\dim_\F M<\infty$, then
\begin{enumerate}
\item[{\rm (c)}] $M_1(S)$ is an $\lf$-subbimodule of $M$.
\item[{\rm (d)}] $M=M_0(S)\oplus M_1(S)$.
\end{enumerate}
\end{thm}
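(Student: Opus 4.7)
The plan is to dispatch (a), (b), (c) quickly using Lemmas \ref{fitting0} and \ref{fitting1}, and then to do the real work in (d). For (a), since $M_0(S)=\bigcap_{s\in S}M_0(\lambda_s)$ and each $M_0(\lambda_s)$ is an $\lf$-subbimodule by Lemma \ref{fitting0}, the intersection is again an $\lf$-subbimodule. Part (c) is completely parallel via Lemma \ref{fitting1}. For (b), the left half is immediate since $M_0(S)\subseteq M_0(\lambda_s)$ for each $s\in S$. For the right half I would specialize (\ref{MLLrep}) to $x=y=s$ to obtain $\rho_s^2=-\rho_s\circ\lambda_s$ and then verify by a one-line induction that
\[
\rho_s^{n+1}=(-1)^n\,\rho_s\circ\lambda_s^n \qquad(n\geq 1),
\]
so that the already established local nilpotency of $\lambda_s$ on $M_0(S)$ forces local nilpotency of $\rho_s$.

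For (d) my plan is first to reduce to the finite case: since $\dim_\F M<\infty$, the descending chain of intersections defining $M_0(S)$ and the ascending chain of sums defining $M_1(S)$ both stabilize on some finite subset $S_0\subseteq S$, and the assertion for $S$ then follows from that for $S_0$. Next, for each $s\in S_0$ let $\pi_s\in\End_\F(M)$ denote the Fitting projection of $M$ onto $M_0(\lambda_s)$ along $M_1(\lambda_s)$; the single-operator Fitting lemma realizes $\pi_s$ as a polynomial in $\lambda_s$. Lemmas \ref{fitting0} and \ref{fitting1} guarantee that every $\lambda_t$ with $t\in\lf$ preserves both $M_0(\lambda_s)$ and $M_1(\lambda_s)$, hence commutes with $\pi_s$; in particular $\{\pi_s\}_{s\in S_0}$ is a pairwise commuting family of idempotents on $M$. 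This is exactly the situation in which Jacobson's generalization of Fitting's lemma \cite[Theorem 4 of Chapter II]{J} delivers the simultaneous block decomposition
\[
M=\bigoplus_{T\subseteq S_0}\Bigl(\bigcap_{s\in T}M_0(\lambda_s)\cap\bigcap_{s\notin T}M_1(\lambda_s)\Bigr).
\]
The $T=S_0$ block is precisely $M_0(S_0)$, while for each $s\in S_0$ the subspace $M_1(\lambda_s)$ is the sum of all blocks indexed by $T\not\ni s$; consequently $\sum_{s\in S_0}M_1(\lambda_s)$ is the sum of all blocks with $T\subsetneq S_0$, yielding $M=M_0(S)\oplus M_1(S)$.

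The trickiest bookkeeping will be the finite-reduction step together with the identification of $M_1(S)$ as the full complement of $M_0(S)$: one has to argue that once $S_0$ is large enough to stabilize both $M_0(\cdot)$ and $M_1(\cdot)$, any further enlargement merely refines the simultaneous decomposition without altering $M_0(S_0)$ or $M_1(S_0)$, so that passing back to $S$ is harmless. Once this is confirmed, the rest is routine given (\ref{MLLrep}), Lemmas \ref{fitting0} and \ref{fitting1}, and the single-operator Fitting lemma invoked through Jacobson.
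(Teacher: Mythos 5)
Your parts (a)--(c) coincide with the paper's proof, and your argument for the right-hand half of (b) --- the identity $\rho_s^{n+1}=(-1)^n\,\rho_s\circ\lambda_s^n$, obtained from (\ref{MLLrep}) by writing $\rho_s^{n+1}=\rho_s\circ\rho_s^n$ and collapsing the leftmost $\rho_s\circ\rho_s$ --- is a correct, self-contained replacement for the paper's citation of Patsourakos. Part (d) is where you genuinely diverge, and your route works. The paper passes to the canonical Lie algebra $\lf_\lie$, shows via $\left(\ad\,\lambda_s\right)^n(\lambda_t)=\lambda_{L_s^n(t)}$ and Engel's theorem that the operators $\lambda_s$ ($s\in S$) generate a nilpotent Lie subalgebra of $\gl(M)$, and then invokes Jacobson's Fitting lemma for nilpotent Lie algebras of linear transformations wholesale. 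You instead reduce to a finite subset $S_0$ (legitimate: $\dim_\F M<\infty$ forces both the intersections and the sums to stabilize on a finite subset, and taking the union of the two stabilizing subsets handles $M_0$ and $M_1$ simultaneously), form the single-operator Fitting projections $\pi_s$, use Lemmas \ref{fitting0} and \ref{fitting1} to see that every $\lambda_t$ preserves both Fitting components of each $\lambda_s$, hence that the $\pi_s$ pairwise commute, and read off the simultaneous block decomposition; the identification of $M_1(S_0)$ with the sum of the blocks indexed by $T\subsetneq S_0$ is correct because $1-\pi_s$ acts as the identity on exactly the blocks with $s\notin T$. This is elementary and self-contained --- it bypasses Engel's theorem and the passage to $\lf_\lie$ entirely, and yields the finer joint block decomposition as a bonus --- at the cost of the finite-reduction bookkeeping, which the paper's approach avoids. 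One attribution quibble: the block decomposition you display is not what \cite[Theorem 4 of Chapter II]{J} asserts (that theorem concerns nilpotent Lie algebras of operators); what you are actually using is the elementary fact that a finite commuting family of idempotents decomposes $M$ into joint image/kernel blocks, so you should prove that one-liner or cite it as such rather than as Jacobson.
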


\begin{proof}
(a) is an immediate consequence of the definition of $M_0(S)$ and Lemma
\ref{fitting0}.

(b): For the left action of $S$ the assertion is an immediate consequence of
the definition of $M_0(S)$ and for the right action the claim then follows from
\cite[Lemma~6]{P1}.

(c): Since a sum of $\lf$-subbimodules is again an $\lf$-subbimodule, the
assertion is an immediate consequence of the definition of $M_1(S)$ and
Lemma \ref{fitting1}.

(d): It follows from \cite[Lemma 3.3]{F} that $M$ is a left $\lf_\lie$-module
via $\overline{x}\cdot m:=x\cdot m$ for any elements $x\in\lf$ and $m\in M$.
Let $\overline{\lambda}:\lf_\lie\to\gl(M)$ denote the corresponding representation
of $\lf_\lie$ on $M$. Then we have that $\overline{\lambda}_{\overline{x}}=
\lambda_x$ for every element $x\in\lf$. In particular, we obtain that
$$M_0(S)=\bigcap_{s\in S}M_0(\lambda_s)=\bigcap_{s\in S}M_0
(\overline{\lambda}_{\overline{s}})$$
and
$$M_1(S)=\sum_{s\in S}M_1(\lambda_s)=\sum_{s\in S}M_1
(\overline{\lambda}_{\overline{s}})\,,$$
and therefore the assertion follows from \cite[Theorem 4 in Chapter II]{J} applied
to the Lie subalgebra $\gf:=\langle\overline{\lambda}_{\overline{s}}\mid s\in S
\rangle_\F$ of $\gl(M)$. Namely, it follows from identity (\ref{LLMrep}) by induction
on $n$ that
$$\left(\ad_\gf\overline{\lambda}_{\overline{s}}\right)^n\left(
\overline{\lambda}_{\overline{t}}\right)=\lambda_{L_s^n(t)}$$
for any elements $s,t\in S$. Then this in conjunction with Engel's theorem (see
\cite[Theorem 3.2]{H}) shows that $\gf$ is nilpotent, and thus Jacobson's result
can be applied.
\end{proof}

The following example shows that in Theorem \ref{fitting} the elements of $S$
do not necessarily act invertibly on the Fitting-1-component $M_1(S)$ of a left
Leibniz module or a Leibniz bimodule $M$. 
\vspace{.3cm}

\noindent {\bf Example B.} Let $\gf$ be the abelian Lie subalgebra of the general
linear Lie algebra $\gl_2(\F)$ generated by
$$E=\left[\begin{array}{cc}0 & 1\\0 & 0\end{array}\right]$$
and
$$I=\left[\begin{array}{cc}1 & 0\\0 & 1\end{array}\right]\,.$$
Then $M:=\F^2$ is a left $\gf$-module via $\lambda_X(m):=Xm$ for any matrix
$X\in\gf$ and any column vector $m\in M$. If we set $S:=\{E,I\}$, then $M_0(S)
=\{0\}$ and $M_1(S)=M$, but clearly $E$ does not act invertibly
on $M$. (Note that $M$ can be made into a Leibniz $\gf$-bimodule
by considering its symmetrization $M_s$ or its anti-symmetrization $M_a$.)


\section{Cohomology of non-semi-simple Leibniz algebras}\label{hh}


Similar to the coboundary operator in \cite[Section 1.8]{LP} for the cohomology
of a right Leibniz algebra with coefficients in a Leibniz bimodule one can also
introduce a coboundary operator $\dl^\bullet$ for the cohomology of a left Leibniz
algebra with coefficients in a Leibniz bimodule as follows. Let $\lf$ be a left
Leibniz algebra over a field $\F$, and let $M$ be an $\lf$-bimodule. For every
non-negative integer $n$ set $\CL^n(\lf,M):=\Hom_\F(\lf^{\otimes n},M)$
and consider the linear transformation $\dl^n:\CL^n(\lf,M)\to\CL^{n+1}(\lf,M)$
defined by
\begin{eqnarray*}
(\dl^nf)(x_1\otimes\dots\otimes x_{n+1}) & := & \sum_{j=1}^n(-1)^{j+1}
x_j\cdot f(x_1\otimes\dots\otimes\hat{x}_j\otimes\dots\otimes x_{n+1})\\
& + & (-1)^{n+1}f(x_1\otimes\dots\otimes x_n)\cdot x_{n+1}\\
& + & \sum_{1\le i<j\le n+1}(-1)^if(x_1\otimes\dots\otimes\hat{x}_i\otimes
\dots\otimes x_{j-1}\otimes x_ix_j\otimes x_{j+1}\otimes\dots\otimes x_{n+1})
\end{eqnarray*}
for any $f\in\CL^n(\lf,M)$ and any elements $x_1,\dots,x_{n+1}\in\lf$.

It is proved in \cite[Lemma 1.3.1]{C} that $\CL^\bullet(\lf,M):=(\CL^n(\lf,M),
\dl^n)_{n\in\N_0}$is a cochain complex, i.e., $\dl^{n+1}\circ\dl^n=0$ for
every non-negative integer $n$. Hence, one can define the {\em cohomology
of $\lf$ with coefficients in an $\lf$-bimodule\/} $M$ by
$$\HL^n(\lf,M):=\HCE^n(\CL^\bullet(\lf,M)):=\ZL^n(\lf,M)/\BL^n(\lf,M)$$
for every non-negative integer $n$, where
$$\ZL^n(\lf,M):=\Ker(\dl^n)\mbox{ and }\BL^n(\lf,M):=\im(\dl^{n-1})\,.$$
(Note that $\dl^{-1}:=0$.)

Moreover, we will also need the linear operator $\theta_a^n:\CL^n(\lf,M)\to
\CL^n(\lf,M)$ defined by
\begin{eqnarray*}
\theta_a^n(f)(x_1\otimes\dots\otimes x_n) & := & a\cdot f(x_1\otimes
\dots\otimes x_n)-\sum_{j=1}^nf(x_1\otimes\dots\otimes x_{j-1}\otimes
ax_j\otimes x_{j+1}\otimes\dots\otimes x_n)
\end{eqnarray*}
for any $f\in\CL^n(\lf,M)$ and any elements $a,x_1,\dots,x_n\in\lf$ as well
as the linear transformation $\iota_a^n:\CL^n(\lf,M)\to\CL^{n-1}(\lf,M)$
defined by
$$\iota_a^n(f)(x_1\otimes\dots\otimes x_{n-1}):=f(a\otimes x_1\otimes\dots
\otimes x_{n-1})$$
for any $f\in\CL^n(\lf,M)$ and any elements $a,x_1,\dots,x_{n-1}\in\lf$.

Then the following identities hold for every element $a\in\lf$ (see
\cite[Proposition 1.3.2\,(1) \& (4)]{C}):
\begin{equation}\label{cartan}
\dl^{n-1}\circ\iota_a^n+\iota_a^{n+1}\circ\dl^n=\theta_a^n
\end{equation}
for every positive integer $n$, and
\begin{equation}\label{invact}
\theta_a^{n+1}\circ\dl^n=\dl^n\circ\theta_a^n
\end{equation}
for every non-negative integer $n$.

\newpage

Our first result in this section is the Leibniz analogue of \cite[Corollary 4.3]{F2}.

\begin{lem}\label{invert}
Let $V$ and $W$ be left modules over a left Leibniz algebra $\lf$. If $x$ is an
element of $\lf$ such that
\begin{enumerate}
\item[{\rm (i)}]  $x$ acts locally nilpotently on $V$, and
\item[{\rm (ii)}] $x$ acts invertibly on $W$,
\end{enumerate}
then $x$ acts invertibly on $\Hom_\F(V,W)$.
\end{lem}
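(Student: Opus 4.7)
The plan is to construct an explicit inverse for the action of $x$ on $\Hom_\F(V,W)$ by means of a ``geometric-series'' formula whose termwise convergence is forced by local nilpotency.

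First, I would recall that when $V$ and $W$ are left $\lf$-modules, the natural left $\lf$-module structure on $\Hom_\F(V,W)$ is given by
$$(x\cdot f)(v):=x\cdot f(v)-f(x\cdot v)$$
for $x\in\lf$, $f\in\Hom_\F(V,W)$, and $v\in V$; the verification of (LLM) for this action is routine (it is just the standard $\lf_\lie$-module structure pulled back along $\lf\twoheadrightarrow\lf_\lie$, which is legitimate by \cite[Lemma 3.3]{F}). Abbreviating $a:=\lambda_x^W$ and $b:=\lambda_x^V$, the left action of $x$ on $\Hom_\F(V,W)$ is therefore the operator $T(f)=a\circ f-f\circ b$, and we have to show $T$ is invertible.

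Next, I would propose the candidate inverse
$$(Sg)(v):=\sum_{k=0}^{\infty}a^{-(k+1)}\bigl(g(b^{k}v)\bigr).$$
Hypothesis (ii) provides the powers $a^{-(k+1)}$ on $W$, while hypothesis (i) guarantees that for each $v\in V$ there exists $n(v)\in\N$ with $b^{n(v)}v=0$, so the sum is in fact finite for every fixed $v$. Hence $Sg$ is a well-defined element of $\Hom_\F(V,W)$; the $\F$-linearity of $Sg$ in $v$ follows termwise, since on any $b$-stable subspace of $V$ on which a fixed power of $b$ vanishes, $S$ reduces to a genuine finite sum of honest linear maps.

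Finally, the identities $T\circ S=\id_{\Hom_\F(V,W)}=S\circ T$ will follow from two brief telescoping computations: applying $T$ to $Sg$ produces two series whose index shift differs by one, and all terms cancel except the $k=0$ summand $a^{0}g(b^{0}v)=g(v)$; the computation for $S\circ T$ is symmetric. I expect no serious obstacle here: the only point requiring a moment's care is the $\F$-linearity check for $S$ in view of the fact that the truncation index depends on $v$, but this is immediate once one fixes $v$ and writes out the truncated sum. The argument is a direct Leibniz adaptation of the Hochschild-cohomology version \cite[Corollary 4.3]{F2}, and no new difficulty arises once the correct left $\lf$-module structure on $\Hom_\F(V,W)$ has been identified.
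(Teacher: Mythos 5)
Your proof is correct and takes essentially the same route as the paper: one first equips $\Hom_\F(V,W)$ with the standard left $\lf$-module structure $(x\cdot f)(v)=x\cdot f(v)-f(x\cdot v)$ (legitimate since left Leibniz modules are $\lf_\lie$-modules) and then inverts the operator $f\mapsto a\circ f-f\circ b$. The only difference is that the paper delegates this last step to \cite[Lemma 4.2]{F2}, whereas you supply the explicit geometric-series inverse --- correctly observing that the sum is finite pointwise in $v$ even though it need not terminate as a series of operators --- which is precisely the content of the cited lemma.
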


\begin{proof}
Similarly to the proof in \cite[Lemma 1.4\,(b)]{FW}, one can show that $\Hom_\F
(V,W)$ is a left $\lf$-module via
$$(x\cdot f)(y) := x\cdot f(y)-f(xy)$$
for every $f\in\Hom_\F(V,W)$ and any elements $x,y\in\lf$. Then the assertion is
a special case of \cite[Lemma 4.2]{F2}.
\end{proof}

The next two results are the Leibniz analogues of results that Farnsteiner
obtained for Hochschild cohomology (see \cite[Theorem 4.4]{F2} and
\cite[Theorem 4.7]{F2}). The proofs follow those in \cite{F2} very closely,
but for the convenience of the reader we include the details.

\begin{thm}\label{vanhh}
Let $\lf$ be a left Leibniz algebra, and let $M$ be an $\lf$-bimodule with
associated representation $(\lambda,\rho)$. If $a$ is an element of $\lf$
such that
\begin{enumerate}
\item[{\rm (i)}]  $L_a:\lf\to\lf$, $x\mapsto ax$ is locally nilpotent, and
\item[{\rm (ii)}] $\lambda_a:M\to M$ is invertible,
\end{enumerate}
then $\HL^n(\lf,M)=0$ for every positive integer $n$. Moreover, if $M$
is symmetric, then $\HL^n(\lf,M)=0$ for every non-negative integer $n$.
\end{thm}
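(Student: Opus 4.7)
The plan is to apply a Cartan-formula argument at the cochain level: exhibit $\theta_a^\bullet$ as a chain endomorphism of $\CL^\bullet(\lf, M)$ that is simultaneously null-homotopic in positive degrees (hence inducing the zero map on cohomology) and invertible in every degree (hence an isomorphism on cohomology). An endomorphism that is simultaneously zero and an isomorphism can only live on a trivial space, forcing $\HL^n(\lf, M) = 0$ for $n \geq 1$.

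The three ingredients fit together as follows. Identity (\ref{invact}), $\theta_a^{n+1} \circ \dl^n = \dl^n \circ \theta_a^n$, is exactly the statement that $\theta_a^\bullet$ is a chain map. Identity (\ref{cartan}), $\dl^{n-1} \circ \iota_a^n + \iota_a^{n+1} \circ \dl^n = \theta_a^n$, says that $\iota_a^\bullet$ is a chain homotopy from $\theta_a^\bullet$ to zero in degrees $n \geq 1$. For the invertibility of $\theta_a^n$ on $\CL^n(\lf, M) = \Hom_\F(\lf^{\otimes n}, M)$, I would endow $\lf^{\otimes n}$ with the diagonal left $\lf$-action $x \cdot (y_1 \otimes \cdots \otimes y_n) := \sum_{j=1}^n y_1 \otimes \cdots \otimes xy_j \otimes \cdots \otimes y_n$, and a short calculation using the identity $(xy)z = x(yz) - y(xz)$ shows this is a left Leibniz module structure (tensor products of left Leibniz modules with diagonal action are again left Leibniz modules). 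A direct comparison then identifies $\theta_a^n$ with the left action of $a$ on $\Hom_\F(\lf^{\otimes n}, M)$ given by Lemma \ref{invert}. By hypothesis (i), the operator $L_a$ is locally nilpotent on $\lf$, and since the $n$ commuting diagonal copies are individually locally nilpotent, their sum is locally nilpotent on $\lf^{\otimes n}$; together with hypothesis (ii) that $\lambda_a$ is invertible on $M$, Lemma \ref{invert} yields the invertibility of $\theta_a^n$.

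Putting these pieces together, $\theta_a^n$ induces an automorphism of $\HL^n(\lf, M)$ (being an invertible chain map) and also the zero map (being null-homotopic) for $n \geq 1$, so $\HL^n(\lf, M) = 0$ in that range. The symmetric-case claim at $n = 0$ needs a separate but elementary argument: from the formula $(\dl^0 m)(x) = -m \cdot x$ one has $\HL^0(\lf, M) = \ZL^0(\lf, M) = M^\lf$, and when $M$ is symmetric this equals $\bigcap_{x \in \lf} \Ker(\lambda_x) \subseteq \Ker(\lambda_a) = 0$ by (ii).

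The main obstacle I anticipate is the bookkeeping to verify that the diagonal action indeed endows $\lf^{\otimes n}$ with a left Leibniz module structure and that the resulting Hom-module operator from Lemma \ref{invert} coincides on the nose with the prescribed $\theta_a^n$; this needs the left Leibniz identity, not just antisymmetry, so it must be checked explicitly rather than imported wholesale from the Lie case. Once these identifications are in place, the remainder is the familiar Farnsteiner-style ``zero-and-invertible'' mechanism, carried over from Hochschild cohomology to Leibniz cohomology essentially without change.
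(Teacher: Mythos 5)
Your proposal is correct and follows essentially the same route as the paper: the same diagonal left Leibniz module structure on $\lf^{\otimes n}$, the same appeal to Lemma \ref{invert} for the invertibility of $\theta_a^n$, and the same Farnsteiner-style ``null-homotopic yet invertible'' conclusion via identities (\ref{cartan}) and (\ref{invact}), with the symmetric $n=0$ case handled by the identical observation that $M^\lf\subseteq\Ker(\lambda_a)=0$.
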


\begin{proof}
Let $n$ be an arbitrary non-negative integer. Then it follows similarly as in
the proof of \cite[Proposition 1.3.2\,(2)]{C} that $\lf^{\otimes n}$ is a left
$\lf$-module via
$$\tau_a^n(x_1\otimes\dots\otimes x_n):=\sum_{j=1}^n x_1\otimes
\dots\otimes x_{j-1}\otimes ax_j\otimes x_{j+1}\otimes\dots\otimes x_n\,.$$
Since by hypothesis $L_a$ is locally nilpotent on $\lf$, we conclude that
$\tau_a^n$ is locally nilpotent on $\lf^{\otimes n}$.

Recall that $\theta^n$ denotes the representation of $\lf$ on $\CL^n(\lf,M)
:=\Hom_\F(\lf^{\otimes n},M)$ obtained from $\tau^n$ and $\lambda$.
It follows from Lemma \ref{invert} that $\theta_a^n$ is invertible. By
virtue of identity (\ref{invact}), $\theta^n$ induces a representation 
$\overline{\theta}^n$ of $\lf$ on $\HL^n(\lf,M)$ via
$$\overline{\theta}^n_x(f+\BL^n(\lf,M)):=\theta_x^n(f)+\BL^n(\lf,M)\,.$$
As a consequence, we deduce that $\overline{\theta}_a^n$ is invertible
on $\HL^n(\lf,M)$.

Let $n$ be any positive integer. Then we obtain from the identity (\ref{cartan})
that $\theta_x^n(f)=\dl^{n-1}(\iota_x^n(f))\in\BL^n(\lf,M)$ for every $x\in\lf$
and every $f\in\ZL^n(\lf,M)$ which implies that $\overline{\theta}^n$ is the
trivial representation of $\lf$ on $\HL^n(\lf,M)$. Consequently, we have that
$\overline{\theta}_a^n=0$ is an invertible linear operator on $\HL^n(\lf,M)$,
and therefore $\HL^n(\lf,M)=0$.

On the other hand, if $M$ is symmetric, then $a$ also acts invertibly and trivially
on $M^\lf$ which yields in addition that $\HL^0(\lf,M)=M^\lf=0$.
\end{proof}

\begin{thm}\label{fittinghh}
Let $\lf$ be a left Leibniz algebra over a field $\F$, and let $M$ be an $\lf$-bimodule.
If $S$ is a subset of $\lf$ such that
\begin{enumerate}
\item[{\rm (i)}]  $L_s:\lf\to\lf$, $x\mapsto sx$ is locally nilpotent for every element
                         $s\in S$, and
\item[{\rm (ii)}] $\dim_\F M/M_0(S)<\infty$,
\end{enumerate}
then $\HL^n(\lf,M)\cong\HL^n(\lf,M_0(S))$ {\rm (}\hspace{-.5mm}as $\F$-vector
spaces{\rm )} for every integer $n\ge 2$. Moreover, if $M$ is symmetric, then
$\HL^n(\lf,M)\cong\HL^n(\lf,M_0(S))$ for every non-negative integer $n$.
\end{thm}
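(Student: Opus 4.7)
The plan is to study the short exact sequence of $\lf$-bimodules
\[
0 \longrightarrow M_0(S) \longrightarrow M \longrightarrow \overline{M} \longrightarrow 0,
\]
where $\overline{M} := M/M_0(S)$ is an $\lf$-bimodule by Theorem \ref{fitting}(a) and is finite-dimensional by hypothesis (ii). This yields a long exact sequence in Leibniz cohomology, and the stated isomorphisms will follow once one establishes that $\HL^n(\lf,\overline{M}) = 0$ for every $n \geq 1$ (and also for $n=0$ when $M$ is symmetric, in which case $\overline{M}$ is symmetric as well).

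The key preliminary observation is that $\overline{M}_0(S) = 0$. Indeed, if $\overline{m} \in \overline{M}_0(S)$ is represented by $m \in M$, then for each $s \in S$ there exists $k$ with $\lambda_s^k(m) \in M_0(S) \subseteq M_0(\lambda_s)$; by the definition of $M_0(\lambda_s)$, some further power of $\lambda_s$ annihilates $\lambda_s^k(m)$, so that $m \in M_0(\lambda_s)$. Intersecting over $s \in S$ places $m$ in $M_0(S)$, and therefore $\overline{m} = 0$.

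The main technical step is the following claim, to be proved by induction on $\dim_\F N$: if $N$ is a finite-dimensional $\lf$-bimodule with $N_0(S) = 0$, then $\HL^n(\lf, N) = 0$ for every $n \geq 1$, and also for $n = 0$ whenever $N$ is symmetric. For the inductive step, assuming $N \neq 0$, the vanishing $N_0(S) = 0$ forces some $s \in S$ with $N_1(\lambda_s) \neq 0$; Lemmas \ref{fitting0} and \ref{fitting1} then produce an $\lf$-bimodule decomposition $N = N_0(\lambda_s) \oplus N_1(\lambda_s)$, so the cohomology splits accordingly. On $N_1(\lambda_s)$ the operator $\lambda_s$ acts invertibly while $L_s$ is locally nilpotent on $\lf$, so Theorem \ref{vanhh} supplies the required vanishing (including degree $0$ in the symmetric case). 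On $N_0(\lambda_s)$ one checks the containment $(N_0(\lambda_s))_0(S) \subseteq N_0(S) = 0$, so the induction hypothesis applies, since $\dim_\F N_0(\lambda_s) < \dim_\F N$ because $N_1(\lambda_s) \neq 0$.

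Applying the claim to $N := \overline{M}$ and feeding the result into the long exact cohomology sequence produces the isomorphisms $\HL^n(\lf, M_0(S)) \cong \HL^n(\lf, M)$ in the asserted degrees, with the symmetric case at $n = 0$ handled by the initial segment $0 \to \HL^0(\lf, M_0(S)) \to \HL^0(\lf, M) \to \HL^0(\lf, \overline{M}) = 0$ of the sequence. The principal obstacle is executing the induction: one must confirm that the Fitting decomposition of $N$ with respect to a single $\lambda_s$ really is a decomposition of $\lf$-bimodules (the content of Lemmas \ref{fitting0} and \ref{fitting1}) and that $(N_0(\lambda_s))_0(S)$ sits inside $N_0(S)$, so that the hypothesis on $N$ is inherited by the smaller summand and the induction can proceed.
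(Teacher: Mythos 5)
Your argument is correct, but it takes a genuinely different route from the paper's. The paper inducts directly on $d:=\dim_\F M/M_0(S)$: for $d>0$ it picks $s_0\in S$ with $N:=M_0(\lambda_{s_0})\ne M$, checks that $N_0(S)=M_0(S)$ so the induction hypothesis applies to the subbimodule $N$, and then kills $\HL^{n}(\lf,M/N)$ for $n\ge 1$ via Theorem \ref{vanhh}, because $\lambda_{s_0}$ induces an injective, hence invertible, operator on the finite-dimensional quotient $M/N$. That argument uses only Lemma \ref{fitting0} (the Fitting-$0$ component is a subbimodule) --- a point the authors stress in the introduction --- and deliberately avoids any direct-sum decomposition, which would be unavailable on $M$ itself since $M$ may be infinite-dimensional. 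You instead pass at the outset to the finite-dimensional quotient $\overline{M}=M/M_0(S)$, verify $\overline{M}_0(S)=0$, and then induct on dimension using the full bimodule Fitting decomposition $N=N_0(\lambda_s)\oplus N_1(\lambda_s)$, which additionally requires Lemma \ref{fitting1}. Both routes ultimately feed a piece on which some $\lambda_s$ acts invertibly into Theorem \ref{vanhh}; yours spends one extra lemma but isolates a clean, reusable finite-dimensional vanishing statement ($N_0(S)=0$ implies $\HL^{n}(\lf,N)=0$ for $n\ge 1$, and for $n=0$ if $N$ is symmetric) and confines the infinite-dimensional bookkeeping to a single application of the long exact sequence. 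The verifications you flag --- that both Fitting components are subbimodules, that $\bigl(N_0(\lambda_s)\bigr)_0(S)\subseteq N_0(S)$, and that $\overline{M}_0(S)=0$ --- all check out, so there is no gap.
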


\begin{proof}
We proceed by induction on $d:=\dim_\F M/M_0(S)$. The base step $d=0$ is
trivially true. So let $d>0$. Then there exists an element $s_0\in S$ such that
$N:=M_0(\lambda_{s_0})\ne M$. Note that it follows from Lemma \ref{fitting0}
that $N$ is an $\lf$-subbimodule of $M$.

Next, we show that $N_0(S)=M_0(S)$. As $N\subseteq M$, we have that
$N_0(S)\subseteq M_0(S)$. In order to prove the reverse inclusion, let
$m\in M_0(S)$ be an arbitrary element. As $s_0\in S$, we have that
$m\in M_0(S)\subseteq M_0(\lambda_{s_0})=N$, and thus $m\in N_0(S)$.

Because of $N_0(S)=M_0(S)$, we have $\dim_\F N/N_0(S)<d$, and therefore
the induction hypothesis yields that
$$\HL^n(\lf,N)\cong\HL^n(\lf,N_0(S))=\HL^n(\lf,M_0(S))$$
for any integer $n\ge 2$. Now from the short exact sequence
$$0\to N\to M\to M/N\to 0$$
of $\lf$-bimodules we obtain the long exact cohomology sequence
$$\cdots\to\HL^{n-1}(\lf,M/N)\to\HL^n(\lf,N)\to\HL^n(\lf,M)\to\HL^n(\lf,M/N)
\to\cdots\,.$$
It follows from the definition of $N$ that the linear operator on the finite-dimensional
vector space $M/N$ induced by $\lambda_{s_0}$ is injective, and therefore invertible.
As a consequence, we deduce from Theorem \ref{vanhh} that $\HL^n(\lf,M/N)=0$
for every integer $n\ge 1$, and thus we conclude
$$\HL^n(\lf,M)\cong\HL^n(\lf,N)\cong\HL^n(\lf,M_0(S))$$
for every integer $n\ge 2$.

On the other hand, if $M$ is symmetric, then we obtain the same conclusion
for every integer $n\ge 0$.
\end{proof}

\noindent {\bf Remark 1.} The example in \cite[Section 6, p.\ 663]{F2} in
conjunction with \cite[Theorem~2.6]{FW} shows that hypothesis (ii) in
Theorem \ref{fittinghh} is necessary. Moreover, if $\dim_\F M<\infty$,
then it follows from the Fitting decomposition for the linear operator
$\lambda_{s_0}$ in conjunction with Theorem \ref{vanhh} that Theorem
\ref{fittinghh} remains true for $n=1$. We believe that Theorem \ref{fittinghh}
is always true for $n=1$, but at the moment we do not know how to prove this.
\vspace{.3cm}

Recall that an algebra $\rf$ is called {\em nilpotent\/} if there exists a positive
integer $n$ such that any product of $n$ elements in $\rf$, no matter how
associated, is zero (see \cite{S}). Let $\lf$ be a left Leibniz algebra. Then the
{\em left descending central series\/}
$$^1\hspace{-.5mm}
\lf\supseteq\hspace{.1mm}^2\hspace{-.5mm}\lf\supseteq\hspace{.1mm}
^3\hspace{-.5mm}\lf\supseteq\cdots$$
of $\lf$ is defined recursively by $^1\hspace{-.5mm}\lf:=\lf$ and
$^{r+1}\hspace{-.5mm}\lf:=\lf\hspace{.5mm}(^r\hspace{-.5mm}\lf)$ for
every positive integer $r$. It follows from \cite[Proposition 5.3 and Lemma
5.6]{F} that a left Lie algebra $\lf$ is nilpotent exactly when there exists a
positive integer $m$ such that $^m\hspace{-.5mm}\lf=0$.

The following immediate consequence of Theorem \ref{fittinghh} is the analogue
of \cite[Corollary~6.3]{F2} for Leibniz cohomology:

\begin{cor}\label{cohfitting}
Let $\lf$ be a left Leibniz algebra over a field $\F$, and let $\nf$ be a nilpotent
right ideal of $\lf$. If $M$ is an $\lf$-bimodule such that $\dim_\F M/M_0(\nf)
<\infty$, then $\HL^n(\lf,M)\cong\HL^n(\lf,M_0(\nf))$ {\rm (}\hspace{-.5mm}as
$\F$-vector spaces{\rm )} for every integer $n\ge 2$. Moreover, if $M$ is symmetric,
then $\HL^n(\lf,M)\cong\HL^n(\lf,M_0(\nf))$ for every non-negative integer $n$.
\end{cor}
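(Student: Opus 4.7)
The plan is to deduce this immediately from Theorem~\ref{fittinghh} applied with $S:=\nf$. Hypothesis~(ii) of that theorem is exactly the stated dimensional condition, so the only thing to verify is hypothesis~(i): the left multiplication operator $L_s:\lf\to\lf$, $x\mapsto sx$, is locally nilpotent for every $s\in\nf$.

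Fix $s\in\nf$ and $x\in\lf$. The crucial observation is that $\nf$ is a \emph{right} ideal, so $sx\in\nf\lf\subseteq\nf$; this lets us push $x$ out of the larger algebra into $\nf$ after a single application of $L_s$. A straightforward induction then shows that for every $k\ge 1$ one has $L_s^k(x)=L_s^{k-1}(sx)$, a left-normed product of $k$ elements of $\nf$ (namely $k-1$ copies of $s$ together with $sx$). Because $\nf$ is nilpotent, there is a positive integer $n$ such that every $n$-fold product of elements of $\nf$, in any association, vanishes; in particular $L_s^n(x)=0$. Hence $L_s$ is (uniformly, a fortiori locally) nilpotent on $\lf$.

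With both hypotheses of Theorem~\ref{fittinghh} established, the claimed isomorphisms $\HL^n(\lf,M)\cong\HL^n(\lf,M_0(\nf))$ for $n\ge 2$, and for all $n\ge 0$ when $M$ is symmetric, follow at once. There is no real obstacle here; the only point demanding care is distinguishing left from right ideals, since if $\nf$ were merely a left ideal the product $sx$ would not need to lie in $\nf$ and the nilpotency of $\nf$ could not be invoked to control iterates of $L_s$.
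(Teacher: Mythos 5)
Your proposal is correct and follows essentially the same route as the paper: both reduce to Theorem~\ref{fittinghh} and verify hypothesis~(i) by observing that the right-ideal property forces $sx\in\nf$, after which the iterates $L_s^k(x)$ become products of $k$ elements of $\nf$ (the paper phrases this as $L_s^r(x)\in{}^r\nf$ via the left descending central series) and vanish by nilpotency. The only quibble is terminological: the nested product $s(s(\cdots(sx)))$ is right-normed rather than left-normed, but this is immaterial since the paper's definition of nilpotency is association-independent.
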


\begin{proof}
According to Theorem \ref{fittinghh}, we need only to show that $L_a:\lf\to\lf$
is locally nilpotent for every element $a\in\nf$. Since $\nf$ is nilpotent, this
follows from $L_a^r(x)\in\hspace{.1mm}^r\hspace{-.1mm}\nf$ for any
element $x\in\lf$ and any positive integer $r$.
\end{proof}

\noindent {\bf Remark 2.} The proof of Corollary \ref{cohfitting} shows that
$L_a$ is nilpotent for every element $a\in\nf$. In fact, the same exponent
can be chosen for any such element. According to Remark 1, Corollary
\ref{cohfitting} also holds for $n=1$ provided $\dim_\F M<\infty$.
\vspace{.3cm}

As a consequence of the previous result we obtain the following vanishing theorem
for Leibniz cohomology which will be useful in the next section.

\begin{cor}\label{van}
Let $\lf$ be a left Leibniz algebra, and let $\nf$ be a nilpotent right ideal of $\lf$.
If $M$ is a finite-dimensional $\lf$-bimodule such that $M^\nf=0$, then $\HL^n
(\lf,M)=0$ for every non-negative integer $n$.
\end{cor}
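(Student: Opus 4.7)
The plan is to combine the symmetric case of Corollary \ref{cohfitting} with an Engel-type argument that forces the Fitting-0-component to vanish. First, since $M^\nf=0$, Lemma \ref{sym} immediately yields that $M$ is symmetric. Because $M$ is finite-dimensional, the hypothesis $\dim_\F M/M_0(\nf)<\infty$ of Corollary \ref{cohfitting} is automatic, and its symmetric version gives $\HL^n(\lf,M)\cong\HL^n(\lf,M_0(\nf))$ for every non-negative integer $n$. Thus the entire problem reduces to proving that $M_0(\nf)=0$.

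For this, I would argue by contradiction via Engel's theorem applied to $\lambda(\nf)$ acting on $M_0(\nf)$. By Theorem \ref{fitting}\,(b), every element of $\nf$ acts locally nilpotently on $M_0(\nf)$ from the left, and since $M_0(\nf)$ is finite-dimensional, each $\lambda_s|_{M_0(\nf)}$ with $s\in\nf$ is in fact nilpotent. Moreover, identity (\ref{LLMrep}) yields $[\lambda_s,\lambda_t]=\lambda_{st}$ for $s,t\in\nf$, and since $\nf$ is a right ideal we have $st\in\nf$; therefore the restriction $\lambda(\nf)|_{M_0(\nf)}$ is a Lie subalgebra of $\gl(M_0(\nf))$ consisting entirely of nilpotent operators. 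If $M_0(\nf)\ne 0$, Engel's theorem then produces a nonzero $m\in M_0(\nf)$ annihilated by every $\lambda_s$ with $s\in\nf$. Since $M$, and hence its subbimodule $M_0(\nf)$, is symmetric, we also have $m\cdot s=-s\cdot m=0$ for every $s\in\nf$, so $0\ne m\in M^\nf$, contradicting the hypothesis. Hence $M_0(\nf)=0$, and the corollary follows.

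The only step requiring any genuine care is the verification that $\lambda(\nf)|_{M_0(\nf)}$ is closed under the commutator bracket; this is exactly where the hypothesis that $\nf$ is a \emph{right} ideal (rather than merely a left ideal) enters, since it guarantees $st\in\nf$ and therefore nilpotence of the bracket. Everything else is a routine combination of Lemma \ref{sym}, Corollary \ref{cohfitting}, Theorem \ref{fitting}\,(b), and the classical Engel theorem.
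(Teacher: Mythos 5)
Your proof is correct and follows essentially the same route as the paper: symmetry via Lemma \ref{sym}, reduction to showing $M_0(\nf)=0$ via the symmetric case of Corollary \ref{cohfitting}, and an Engel-type argument to force $M_0(\nf)=0$. The only cosmetic difference is that you apply the classical Engel theorem to the Lie algebra $\lambda(\nf)$ and then use symmetry to pass from left-invariants to right-invariants, whereas the paper invokes the Leibniz analogue of Engel's theorem, which produces a non-zero element of $M_0(\nf)^\nf$ directly.
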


\begin{proof}
It follows from Lemma \ref{sym} in conjunction with the hypothesis $M^\nf=0$
that $M$ is symmetric. Suppose that $M_0(\nf)\ne 0$. Since by definition the
elements of $\nf$ act nilpotently from the left on the $\lf$-subbimodule $M_0
(\nf)$ of $M$, the Leibniz analogue of Engel’s theorem for Lie algebras of linear
transformations (see \cite[Theorem 7]{P1} or \cite[Theorem 5.17]{F}) implies
that $M_0(\nf)^\nf\ne 0$. Consequently, we have that $0\ne M_0(\nf)^\nf
\subseteq M^\nf$ which contradicts the hypothesis. We conclude that $M_0
(\nf)=0$, and thus Corollary \ref{cohfitting} yields the assertion.
\end{proof}

As a first application of the previous results we conclude this section by proving
the following analogues of \cite[Theorem~2.4]{F1} for Leibniz cohomology.

We say that an $\lf$-bimodule $M$ with associated representation $(\lambda,
\rho)$ is {\em right faithful\/} if its right annihilator $\ann_\lf^r(M):=\Ker
(\rho)$ is zero.

\begin{cor}\label{cohnonsemisim}
Let $\lf$ be a left Leibniz algebra, and let $M$ be a finite-dimensional right
faithful irreducible $\lf$-bimodule. If $\lf$ is either a non-semi-simple Lie
algebra or any non-Lie Leibniz algebra, then $\HL^n(\lf,M)=0$ for every
non-negative integer $n$.
\end{cor}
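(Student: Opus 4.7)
The plan is to reduce to Corollary \ref{van} by exhibiting a non-zero nilpotent (two-sided) ideal $\nf$ of $\lf$ with $M^\nf = 0$; then $\nf$ is in particular a nilpotent right ideal, and since $M$ is finite-dimensional, Corollary \ref{van} immediately yields $\HL^n(\lf,M)=0$ for every $n\ge 0$.

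First I would produce the ideal $\nf$ case by case. If $\lf$ is a non-Lie Leibniz algebra, take $\nf:=\leib(\lf)$: this is a non-zero abelian (hence nilpotent) ideal of $\lf$. If $\lf$ is a non-semi-simple Lie algebra, its solvable radical $\rad(\lf)$ is non-zero, and the last non-zero term of the derived series of $\rad(\lf)$ is a non-zero abelian ideal of $\lf$ (it is a characteristic ideal of $\rad(\lf)$, which is itself an ideal of $\lf$, so it is an ideal of $\lf$). In either case, $\nf$ is a non-zero nilpotent ideal, and therefore a nilpotent right ideal of $\lf$.

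Next I would use irreducibility of $M$ together with Lemma \ref{inv}: since $\nf$ is a left ideal, $M^\nf$ is an $\lf$-subbimodule of $M$, so by irreducibility $M^\nf=0$ or $M^\nf=M$. If $M^\nf=M$, then $m\cdot s=0$ for every $m\in M$ and every $s\in\nf$, which means $\rho_s=0$ for every $s\in\nf$, i.e., $\nf\subseteq\ann_\lf^r(M)$. Since $M$ is right faithful, $\ann_\lf^r(M)=0$, forcing $\nf=0$, a contradiction. Hence $M^\nf=0$.

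The hypotheses of Corollary \ref{van} are now met ($\nf$ is a nilpotent right ideal, $M$ is finite-dimensional, $M^\nf=0$), and the conclusion $\HL^n(\lf,M)=0$ for every non-negative integer $n$ follows. There is no real obstacle here: the content of the statement is packaged in Corollary \ref{van}, and the only thing to verify is the existence of a non-zero nilpotent ideal, which is straightforward in both cases.
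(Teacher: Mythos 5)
Your proposal is correct and follows essentially the same route as the paper: in both cases one produces a non-zero abelian (hence nilpotent) ideal ($\leib(\lf)$ in the non-Lie case, an abelian ideal coming from the radical in the non-semi-simple Lie case), uses Lemma \ref{inv} plus irreducibility and right faithfulness to force $M^\nf=0$, and then invokes Corollary \ref{van}. The only difference is cosmetic: you argue the contrapositive direction explicitly and spell out how the abelian ideal is obtained from the derived series of the radical, whereas the paper simply asserts its existence.
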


\begin{proof}
Either $\lf$ is a Lie algebra or $\leib(\lf)\ne 0$. In the first case there exists
a non-zero abelian ideal of $\lf$ and in the other case $\leib(\lf)$ is such an
ideal which we both call $\af$. If $M^\af=0$, the assertion follows from
Corollary \ref{van}. Otherwise we obtain from Lemma \ref{inv} that $M^\af$
is a non-zero $\lf$-subbimodule of $M$. As $M$ is irreducible, this implies
that $M^\af=M$, and thus $0\ne\af\subseteq\ann^r_\lf(M)$. But by hypothesis
the latter is zero which is a contradiction. 
\end{proof}

\noindent {\bf Remark 3.} Note that for non-semi-simple Lie algebras Corollary
\ref{cohnonsemisim} can also be obtained from \cite[Theorem 2.4]{F1} in
conjunction with \cite[Theorem 2.6]{FW}.
\vspace{.1cm}

\begin{cor}\label{whitehead}
Let $\lf$ be a left Leibniz algebra over a field of characteristic zero. If $M$
is a finite-dimensional right faithful irreducible $\lf$-bimodule, then $\HL^n
(\lf,M)=0$ for every non-negative integer $n$.
\end{cor}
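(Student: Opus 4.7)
The proof splits into two cases based on whether $\lf$ is a semi-simple Lie algebra. First, if $\lf$ is either a non-semi-simple Lie algebra or any non-Lie Leibniz algebra, then Corollary \ref{cohnonsemisim} applies directly and gives $\HL^n(\lf,M)=0$ for every $n\ge 0$ without any use of the characteristic hypothesis. We may therefore assume henceforth that $\lf$ is a non-zero semi-simple Lie algebra over a field of characteristic zero.

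Next, because $M_0$ is an anti-symmetric $\lf$-subbimodule of $M$ with symmetric quotient $M/M_0$, the irreducibility of $M$ forces $M$ to be either anti-symmetric (if $M_0=M$) or symmetric (if $M_0=0$). If $M$ were anti-symmetric, then $\rho_x=0$ for every $x\in\lf$, forcing $\ann_\lf^r(M)=\lf\ne 0$ and contradicting right faithfulness. Hence $M$ is symmetric, and since $\lf$ is a Lie algebra it can be viewed as a finite-dimensional left $\lf$-module via $m\cdot x=-x\cdot m$. Moreover, right faithfulness together with $\lf\ne 0$ implies that the left action of $\lf$ on $M$ is non-trivial, so $M$ is a non-trivial irreducible left $\lf$-module.

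Finally, the classical Whitehead vanishing theorem for semi-simple Lie algebras in characteristic zero (via the standard Casimir element argument, using that the Casimir acts by a non-zero scalar on $M$ and trivially on cohomology) yields $\HCE^n(\lf,M)=0$ for every $n\ge 0$. Invoking the comparison \cite[Theorem 2.6]{FW} between Leibniz cohomology of a Lie algebra and Chevalley-Eilenberg cohomology for symmetric bimodules (whose role in this kind of reduction is precisely what is indicated in Remark 3 above), we then conclude $\HL^n(\lf,M)=0$ for every non-negative integer $n$. The principal obstacle in this plan is the availability of a Leibniz-to-Chevalley-Eilenberg comparison in the Lie algebra setting; this is exactly what \cite[Theorem 2.6]{FW} supplies, and it is where the characteristic zero assumption becomes essential.
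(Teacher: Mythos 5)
Your proof is correct, and its skeleton coincides with the paper's: both arguments let Corollary \ref{cohnonsemisim} absorb everything except the semi-simple case. The difference lies in how that remaining case is dispatched. The paper simply splits on whether $\lf$ is semi-simple as a Leibniz algebra and quotes the vanishing theorem for semi-simple Leibniz algebras \cite[Theorem 4.2]{FW}; you instead note that Corollary \ref{cohnonsemisim} already covers \emph{every} non-Lie Leibniz algebra (semi-simple or not), so that only genuine semi-simple Lie algebras remain, and there you reduce to the classical situation: the dichotomy $M_0\in\{0,M\}$ plus right faithfulness forces $M$ to be symmetric, hence a faithful non-trivial irreducible $\lf$-module, after which the classical Whitehead/Casimir theorem and the Lie-versus-Leibniz comparison \cite[Theorem 2.6]{FW} finish the job --- exactly the pattern of Remark 3. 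What your route buys is independence from \cite[Theorem 4.2]{FW}, at the cost of redoing by hand a reduction that theorem already packages. Two small points. First, you misplace the role of the hypothesis $\mathrm{char}(\F)=0$: the comparison \cite[Theorem 2.6]{FW} between $\HL^\bullet$ and $\HCE^\bullet$ for symmetric bimodules over Lie algebras is valid over any field, and characteristic zero is needed precisely for the Whitehead/Casimir step you carried out in the previous sentence. Second, the corollary does not assume $\dim_\F\lf<\infty$, while semi-simplicity and the Casimir element presuppose it; this is harmless because right faithfulness embeds $\lf$ into $\End_\F(M)$ with $\dim_\F M<\infty$, but it deserves a word since you invoke both notions.
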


\begin{proof}
If $\lf$ is semisimple, the assertion follows from \cite[Theorem 4.2]{FW}, and
if $\lf$ is not semisimple, then the assertion is a consequence of Corollary
\ref{cohnonsemisim}. 
\end{proof}

Finally, as a consequence of Corollary \ref{whitehead} and Corollary \ref{cohnonsemisim}
we obtain the Leibniz analogue of \cite[Theorem 2.4]{F1}:  

\begin{cor}\label{farnsteiner}
Let $\lf$ be a left Leibniz algebra over a field $\F$, and let $M$ be a finite-dimensional
right faithful irreducible $\lf$-bimodule such that $\HL^n(\lf,M)\ne 0$ for some non-negative
integer $n$. Then $\ch(\F)>0$ and $\lf$ is a semi-simple Lie algebra.
\end{cor}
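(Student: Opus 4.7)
The plan is to prove the contrapositive: assuming that either $\ch(\F)=0$ or $\lf$ is not a semi-simple Lie algebra, I will show that $\HL^n(\lf,M)=0$ for every non-negative integer $n$, which contradicts the hypothesis on $M$.

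First, if $\ch(\F)=0$, then since $M$ is a finite-dimensional right faithful irreducible $\lf$-bimodule, Corollary~\ref{whitehead} applies directly and yields $\HL^n(\lf,M)=0$ for every $n\ge 0$, regardless of whether $\lf$ is semi-simple or not.

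Second, suppose $\ch(\F)>0$ and $\lf$ is not a semi-simple Lie algebra. Then $\lf$ is either a non-semi-simple Lie algebra or a non-Lie Leibniz algebra (in the latter case $\leib(\lf)\ne 0$). Both of these situations are precisely the ones handled by Corollary~\ref{cohnonsemisim}, which again gives $\HL^n(\lf,M)=0$ for every $n\ge 0$.

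Combining the two cases, the contrapositive is established, so the conclusion of Corollary~\ref{farnsteiner} follows. The proof is essentially bookkeeping: all the substantive work has been carried out in the two preceding corollaries, and the only thing to verify is that the disjunction ``$\ch(\F)=0$ or $\lf$ is not a semi-simple Lie algebra'' is covered by the union of their hypotheses, which it is. Hence there is no genuine obstacle beyond organizing the case split correctly.
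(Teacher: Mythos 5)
Your proof is correct and is essentially the paper's own argument: the paper derives Corollary \ref{farnsteiner} directly from Corollary \ref{whitehead} (for the characteristic-zero case) and Corollary \ref{cohnonsemisim} (for the non-semi-simple or non-Lie case), exactly as you do via the contrapositive. The case split is organized correctly and there is nothing missing.
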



\section{Cohomology of solvable Leibniz algebras}\label{solv}


As a special case of Corollary \ref{van} we obtain the following generalization
of the analogue of Barnes' vanishing theorem for finite-dimensional nilpotent Lie
algebras \cite[Lemma 3 or Theorem 1]{B1} to Leibniz algebras of arbitrary
dimension:

\begin{thm}\label{vannilp}
Let $\lf$ be a nilpotent left Leibniz algebra. If $M$ is a finite-dimensional
$\lf$-bimodule such that $M^\lf=0$, then $\HL^n(\lf,M)=0$ for every
non-negative integer $n$.
\end{thm}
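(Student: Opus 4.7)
The plan is to apply Corollary \ref{van} directly, taking the nilpotent right ideal to be all of $\lf$ itself. First I would observe that since $\lf$ is nilpotent by hypothesis, the algebra $\lf$ qualifies as a nilpotent right ideal of $\lf$ (trivially a two-sided ideal of itself, and nilpotent in the given sense of \cite{S}, which by \cite[Proposition 5.3 and Lemma 5.6]{F} coincides with the left descending central series eventually terminating at zero). Next, the hypothesis $M^\lf = 0$ is precisely the condition $M^\nf = 0$ for the choice $\nf := \lf$. Since $M$ is finite-dimensional, all hypotheses of Corollary \ref{van} are satisfied, and its conclusion $\HL^n(\lf, M) = 0$ for every non-negative integer $n$ is exactly what we want to prove.

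Because this is a one-line specialization, there is essentially no obstacle: the real work has already been carried out in Corollary \ref{van}, whose proof in turn combines two ingredients. The first is Lemma \ref{sym}, which from $M^\lf = 0$ forces $M$ to be symmetric; the second is the Leibniz analogue of Engel's theorem applied to $\nf = \lf$ acting nilpotently from the left on the sub-bimodule $M_0(\lf)$, which shows that $M_0(\lf)$ must vanish (otherwise $M_0(\lf)^\lf \subseteq M^\lf = 0$ would be contradicted). Once $M_0(\lf) = 0$, Corollary \ref{cohfitting}, itself an immediate consequence of Theorem \ref{fittinghh}, yields $\HL^n(\lf, M) \cong \HL^n(\lf, M_0(\lf)) = \HL^n(\lf, 0) = 0$ for all non-negative $n$ (using the symmetric-module clause, which covers $n = 0$ and $n = 1$ as well).

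In summary, the only thing to verify before invoking the corollary is the compatibility of the nilpotence condition (the whole algebra $\lf$ is certainly a nilpotent right ideal of itself, so the local nilpotence of each $L_a$ required by Theorem \ref{fittinghh} holds uniformly, as noted in Remark 2). No induction, spectral sequence, or additional machinery is needed at this stage; the theorem is strictly a corollary of the vanishing principle established in the previous section.
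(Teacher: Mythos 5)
Your proposal is correct and is exactly the paper's argument: the authors state Theorem \ref{vannilp} as the special case of Corollary \ref{van} obtained by taking the nilpotent right ideal $\nf$ to be $\lf$ itself, with no further proof given. Your recap of the internal mechanics of Corollary \ref{van} (symmetry via Lemma \ref{sym}, vanishing of $M_0(\lf)$ via the Leibniz Engel theorem, then Corollary \ref{cohfitting}) accurately reflects how that corollary is proved in the paper.
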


\noindent Theorem \ref{vannilp} and Proposition \ref{nontriv} enable us
now to prove a Leibniz analogue of Dixmier's vanishing theorem for nilpotent
Lie algebras \cite[Th\'eor\`eme 1]{D} (see also \cite[Proposition 2.8]{FW}
for our preliminary attempt to obtain such a result).

\begin{thm}\label{dixmier}
Let $\lf$ be a finite-dimensional nilpotent left Leibniz algebra, and let $M$
be a finite-dimensional $\lf$-bimodule. If every composition factor of $M$
is non-trivial, then
\begin{eqnarray*}
\HL^n(\lf,M)\cong
\left\{
\begin{array}{cl}
M_0 & \mbox{if }n=0\\
0 & \mbox{if }n\ge 1\,.
\end{array}
\right.
\end{eqnarray*}
Moreover, if $M$ is symmetric, then $\HL^n(\lf,M)=0$ for every non-negative
integer $n$.
\end{thm}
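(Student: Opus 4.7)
The plan is to deduce the general statement from the ``moreover'' case via the canonical short exact sequence $0 \to M_0 \to M \to M_\sym \to 0$, and then to handle the resulting cohomology of the anti-symmetric part via Corollary \ref{cohfitting} together with an Engel-style argument.

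I would begin with the ``moreover'' statement. If $M$ is symmetric with all composition factors non-trivial, the second assertion of Proposition \ref{nontriv} gives $M^\lf = 0$, and since $\lf$ is (trivially) a nilpotent right ideal of itself, Theorem \ref{vannilp} (equivalently, Corollary \ref{van} applied with $\nf = \lf$) immediately yields $\HL^n(\lf,M) = 0$ for every $n \geq 0$.

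For the general statement, the composition factors of the quotient $M_\sym$ are among those of $M$, hence are all non-trivial; the symmetric case just proved therefore gives $\HL^n(\lf, M_\sym) = 0$ for every $n \geq 0$. The long exact cohomology sequence attached to $0 \to M_0 \to M \to M_\sym \to 0$ then collapses to isomorphisms $\HL^n(\lf, M) \cong \HL^n(\lf, M_0)$ for every $n \geq 0$. Since $M_0$ is anti-symmetric, $\HL^0(\lf, M_0) = M_0^\lf = M_0$, which is exactly the claimed value in degree zero.

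The main obstacle is then to show $\HL^n(\lf, M_0) = 0$ for every $n \geq 1$. I would apply Corollary \ref{cohfitting} with $\lf$ itself in the role of the nilpotent right ideal, reducing the task to proving that the Fitting component $(M_0)_0(\lf)$ vanishes. If this component were non-zero, part (b) of Theorem \ref{fitting} would say that every element of $\lf$ acts locally nilpotently on it from the left, so $\lambda(\lf)$ would be a Lie subalgebra of $\gl((M_0)_0(\lf))$ consisting of nilpotent operators, and the classical Engel theorem would produce a non-zero $v \in (M_0)_0(\lf)$ with $x \cdot v = 0$ for every $x \in \lf$. Because $v$ already lies in the anti-symmetric bimodule $M_0$, the right action of $\lf$ on $v$ is automatically zero, so $\F v$ is a one-dimensional trivial $\lf$-subbimodule of $M$, contradicting the non-triviality of all composition factors of $M$. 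The minor technical point that Corollary \ref{cohfitting} is only stated for $n \geq 2$ is handled by Remark 2, which extends the isomorphism to $n = 1$ under the finite-dimensionality of $M_0$.
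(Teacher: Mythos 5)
Your proof is correct, and it shares the paper's overall skeleton --- the symmetric case via Proposition \ref{nontriv} and Theorem \ref{vannilp}, and the reduction of the general case to the anti-symmetric kernel through the short exact sequence $0\to M_0\to M\to M_\sym\to 0$ --- but it treats the anti-symmetric part by a genuinely different route. The paper first reduces, via the long exact sequence, to an \emph{irreducible} anti-symmetric bimodule, applies the dimension shift $\HL^n(\lf,M)\cong\HL^{n-1}(\lf,(\lf^*\otimes M)_s)$ from \cite[Lemma 1.4\,(b)]{FW}, and then filters $(\lf^*\otimes M)_s$ by copies of the non-trivial irreducible symmetric bimodule $M_s$ (using a composition series of the left adjoint module with trivial one-dimensional quotients), so that the already-established symmetric case kills the shifted cohomology. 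You instead invoke Corollary \ref{cohfitting} with $\nf=\lf$ (together with Remark 2 for degree one) to reduce to the Fitting component $(M_0)_0(\lf)$, and then kill that component by Engel's theorem combined with anti-symmetry: a common left-annihilated vector in an anti-symmetric bimodule spans a one-dimensional trivial subbimodule, hence a trivial composition factor, contradicting the hypothesis. Both arguments are sound. Yours is shorter and avoids both the reduction to irreducibles and the degree shift, at the cost of leaning on the $n=1$ extension of Corollary \ref{cohfitting}, which the paper only sketches in Remarks 1 and 2 (though it relies on exactly this extension itself in the proof of Theorem \ref{nonvannilp}, so this is consistent with its standards); note that since $M$ is finite-dimensional here, that extension does follow from the bimodule Fitting decomposition $M=M_0(\lambda_{s_0})\oplus M_1(\lambda_{s_0})$ of Theorem \ref{fitting} together with Theorem \ref{vanhh}. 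The paper's route has the advantage that it transfers almost verbatim to the supersolvable setting of Theorem \ref{barnes}, where one must exclude one-dimensional rather than trivial composition factors and your Engel step would need to be supplemented by a weight-vector argument for the abelian quotient $\lf/\lf\lf$.
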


\begin{proof}
For $n=0$ the assertion is just Proposition \ref{nontriv}. The proof for $n>0$
is divided into three steps. Firstly, for symmetric $\lf$-bimodules the statement
follows from Proposition \ref{nontriv} and Theorem \ref{vannilp}.

Now suppose that $M$ is anti-symmetric. It is clear that subbimodules and
homomorphic images of anti-symmetric bimodules are again anti-symmetric.
By using the long exact cohomology sequence, it is therefore enough to prove
the first part of the theorem for irreducible anti-symmetric $\lf$-bimodules.
In this case we obtain from \cite[Lemma 1.4\,(b)]{FW} that
$$\HL^n(\lf,M)\cong\HL^{n-1}(\lf,\Hom_\F(\lf,M)_s)\cong\HL^{n-1}(\lf,(\lf^*
\otimes M)_s)$$
for every positive integer $n$. By refining the left descending central series of $\lf$
(see \cite[Section 5]{F}), one can construct a composition series
$$\lf_{\ad,\ell}=\lf_k\supset\lf_{k-1}\supset\cdots\supset\lf_1\supset\lf_0=0$$
of the left adjoint $\lf$-module such that $\lf_j/\lf_{j-1}$ is the trivial one-dimensional
$\lf$-module $\F$ for every integer $1\le j\le k$. From the short exact sequences
$0\to\lf_{j-1}\to\lf_j\to\F\to 0$, we obtain by dualizing, tensoring each term with
$M$, and symmetrizing the short exact sequences:
$$0\to M_s\to(\lf_j^*\otimes M)_s\to(\lf_{j-1}^*\otimes M)_s\to 0$$
for every integer $1\le j\le k$. Since $M$ is a non-trivial irreducible left $\lf$-module,
we conclude that $M_s$ is a non-trivial irreducible symmetric $\lf$-bimodule. Hence,
we obtain inductively from the long exact cohomology sequence that $\HL^n(\lf,M)
\cong\HL^{n-1}(\lf,(\lf^*\otimes M)_s)=0$ for every positive integer $n$. 

Finally, if $M$ is arbitrary, then in the short exact sequence
$$0\to M_0\to M\to M_\sym\to 0$$
the first term is anti-symmetric and the third term is symmetric. Hence, another
application of the long exact cohomology sequence in conjunction with the
statements for the symmetric and the anti-symmetric case yields that $\HL^n
(\lf,M)=0$ for every positive integer.
\end{proof}

Let $\lf:=\F e$ be the one-dimensional Lie algebra. It is immediate from the
definition of Leibniz cohomology that $\dim_\F \HL^n(\lf,\F)=1$ for every
non-negative integer $n$. In the following we will generalize this to arbitrary
$\lf$-bimodules $M$. We will see that in positive degrees $\HL^n(\lf,M)$ is
periodic with period $2$. This is very similar to the cohomology of a finite
cyclic group.

\begin{thm}\label{ab}
Let $\lf:=\F e$ be the one-dimensional Lie algebra, and let $M$ be a
Leibniz $\lf$-bimodule. Then 
\begin{eqnarray*}
\HL^n(\lf,M)\cong
\left\{
\begin{array}{cl}
M^\lf & \mbox{if }n=0\\
M^0/M\lf & \mbox{if }n\mbox{ is odd}\\
M^\lf/M_0 & \mbox{if }n\mbox{ is even and }n\not=0
\end{array}
\right.
\end{eqnarray*}
{\rm (}\hspace{-.5mm}as
$\F$-vector spaces{\rm )} for every non-negative integer $n$, where
$$M^0:=\{m\in M\mid e\cdot m+m\cdot e=0\}\,.$$
Moreover, if $M$ is finite dimensional, then
$$M^0/M\lf\cong M^\lf/M_0$$
{\rm (}\hspace{-.5mm}as $\F$-vector spaces{\rm )}.
\end{thm}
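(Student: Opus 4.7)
The plan is to compute the Leibniz cochain complex explicitly, exploiting the fact that $\lf = \F e$ is one-dimensional. Since $\lf^{\otimes n}$ is one-dimensional (spanned by $e^{\otimes n}$), evaluation at $e^{\otimes n}$ gives a canonical isomorphism $\CL^n(\lf,M) \cong M$. Under this identification, I will write the differential $\dl^n$ applied to $m \in M$ by plugging $x_1 = \cdots = x_{n+1} = e$ into the coboundary formula from the excerpt. The third (bracket) sum vanishes because $ee = 0$, and the first sum collapses to $\left(\sum_{j=1}^n (-1)^{j+1}\right) e\cdot m$, which equals $e\cdot m$ for odd $n$ and $0$ for even $n$. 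The middle term contributes $(-1)^{n+1}\, m\cdot e$.

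Carrying this out yields, after identifying $\CL^n(\lf,M)$ with $M$, a two-periodic complex
$$M \xrightarrow{-\rho_e} M \xrightarrow{\lambda_e + \rho_e} M \xrightarrow{-\rho_e} M \xrightarrow{\lambda_e + \rho_e} \cdots,$$
where $\dl^0 = \dl^2 = \dl^4 = \cdots$ sends $m \mapsto -m\cdot e$, and $\dl^1 = \dl^3 = \cdots$ sends $m \mapsto e\cdot m + m\cdot e$. Verifying $\dl^{n+1}\circ \dl^n = 0$ reduces to the two elementary identities $(e\cdot m)\cdot e = e\cdot(m\cdot e)$ and $(m\cdot e)\cdot e = -e\cdot(m\cdot e)$, both of which follow from (LML) and (MLL) using $ee=0$. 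The stated formulas for $\HL^n(\lf, M)$ can then be read off directly: in degree $0$ the kernel of $-\rho_e$ is $M^\lf$; in odd degree the kernel of $\lambda_e+\rho_e$ is $M^0$ and the image of $-\rho_e$ is $M\lf$; in positive even degree the kernel of $-\rho_e$ is $M^\lf$ and the image of $\lambda_e+\rho_e$ is $M_0$.

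For the final isomorphism $M^0/M\lf \cong M^\lf/M_0$ in the finite-dimensional case, I will apply the rank-nullity theorem to the two operators on $M$. Rank-nullity for $\rho_e$ gives $\dim M = \dim M^\lf + \dim M\lf$, while rank-nullity for $\lambda_e + \rho_e$ gives $\dim M = \dim M^0 + \dim M_0$. Subtracting yields $\dim M^0 - \dim M\lf = \dim M^\lf - \dim M_0$, and since the inclusions $M\lf \subseteq M^0$ and $M_0 \subseteq M^\lf$ already follow from $\dl^{n+1}\circ\dl^n = 0$, both quotients make sense and have the same dimension, hence are isomorphic as $\F$-vector spaces.

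The main work is bookkeeping rather than a real obstacle: tracking signs carefully in the differential and confirming that $ee=0$ makes the complex collapse to the two-periodic shape. No novel trick is needed beyond direct computation in the cochain complex, combined with the basic Leibniz bimodule identities (LML) and (MLL).
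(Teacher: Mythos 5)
Your proposal is correct and follows essentially the same route as the paper: identify $\CL^n(\lf,M)$ with $M$, observe that $ee=0$ collapses the coboundary to $-\rho_e$ in even degrees and $\lambda_e+\rho_e$ in odd degrees, read off kernels and images, and use rank--nullity for the finite-dimensional statement. Your direct verification of $\dl^{n+1}\circ\dl^n=0$ from (LML) and (MLL) matches the remark the paper makes immediately after the theorem.
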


\begin{proof}
As $\dim_\F\lf=1$, we have that $\CL^n(\lf,M):=\Hom_\F(\lf^{\otimes n},M)
\cong M$ for every integer $n\ge 0$. Moreover, we obtain that
$$\dl^n(m)=\sum_{j=1}^n(-1)^{j+1}e\cdot m+(-1)^{n+1}m\cdot e$$
for any $f\in\CL^n(\lf,M)$ from which it follows that
\begin{eqnarray*}
\dl^n(m)=
\left\{
\begin{array}{cl}
-m\cdot e & \mbox{if }n\mbox{ is even}\\
e\cdot m+m\cdot e & \mbox{if }n\mbox{ is odd}
\end{array}
\right.
\end{eqnarray*}
for every non-negative integer $n$, and therefore
\begin{eqnarray*}
\Ker(\dl^n)=
\left\{
\begin{array}{cl}
M^\lf & \mbox{if }n\mbox{ is even}\\
M^0 & \mbox{if }n\mbox{ is odd}
\end{array}
\right.
\end{eqnarray*}
and
\begin{eqnarray*}
\im(\dl^{n-1})=
\left\{
\begin{array}{cl}
M_0 & \mbox{if }n\mbox{ is even}\\
M\lf & \mbox{if }n\mbox{ is odd}\,.
\end{array}
\right.
\end{eqnarray*}
This immediately implies the first part of the theorem.

Now let us assume that $\dim_\F M<\infty$. If $(\lambda,\rho)$
denotes the associated representation of $M$, then we have that
$M^\lf=\Ker(\rho_e)$, $M\lf=\im(\rho_e)$, $M^0=\Ker(\lambda_e
+\rho_e)$, and $M_0=\im(\lambda_e+\rho_e)$. In this case we
deduce from the dimension formula for linear transformations that
$$\dim_\F M^\lf+\dim_\F M\lf=\dim_\F M=\dim_\F M^0+\dim_\F M_0\,,$$
or equivalently,
$$\dim_\F M^0/M\lf=\dim_\F M^\lf/M_0\,,$$
which finishes the proof of the second part of the theorem.
\end{proof}

Note that in the special case of the one-dimensional Lie algebra
$\lf=\F e$ the inclusions $M_0\subseteq M^\lf$ and $M\lf\subseteq M^0$
can be obtained directly without the coboundary property. Namely, it follows
from identity (\ref{MLLrep}) that $\rho_e\circ(\lambda_e+\rho_e)=0$,
and thus $M_0\subseteq M^\lf$. Similarly, we conclude from
(\ref{LMLrep}) and $e^2=0$ that $\lambda_e\circ\rho_e=\rho_e
\circ\lambda_e$, and therefore
$$(\lambda_e+\rho_e)\circ\rho_e=\lambda_e\circ\rho_e+\rho_e^2=
\rho_e\circ\lambda_e+\rho_e^2=\rho_e\circ(\lambda_e+\rho_e)=0\,,$$
which yields $M\lf\subseteq M^0$.
\vspace{.3cm}

It would be very interesting to find other Leibniz algebras with periodic
cohomology or even characterize all such Leibniz algebras. We hope
to come back to these questions on another occasion.

Next, we use Theorem \ref{ab} to prove a Leibniz analogue of
Dixmier's non-vanishing theorem for nilpotent Lie algebras
\cite[Th\'eor\`eme 2]{D}. According to Proposition \ref{nontriv},
$M^\lf\not=M_0$ implies that the $\lf$-bimodule $M$ has
a trivial composition factor. In the next result we prove that the
stronger of these two conditions is sufficient for the non-vanishing
of $\HL^n(\lf,M)$ in every positive degree. But note that it follows from
Theorem \ref{ab} that for the three-dimensional Leibniz bimodule
$M$ over the one-dimensional Lie algebra in Example A of Section
1 the cohomology vanishes in every positive degree, but every
composition factor of $M$ is trivial. This shows that the obvious
analogue of the hypothesis in \cite[Th\'eor\`eme 2]{D} is not strong
enough to guarantee the non-vanishing of Leibniz cohomology
in positive degrees.

\begin{thm}\label{nonvannilp}
Let $\lf$ be a non-zero finite-dimensional nilpotent left Leibniz algebra,
and let $M$ be a finite-dimensional $\lf$-bimodule. If $M^\lf\not=M_0$,
then $\HL^n(\lf,M)\ne 0$ for every non-negative integer $n$.
\end{thm}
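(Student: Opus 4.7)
The plan is to induct on $d := \dim_\F \lf$. For the base case $d = 1$, we have $\lf = \F e$, the one-dimensional Lie algebra. The always-valid inclusion $M_0 \subseteq M^\lf$ (which follows from (MLL) via $\rho_y\circ(\lambda_x+\rho_x)=0$) combined with the hypothesis $M^\lf \neq M_0$ forces both $M^\lf \neq 0$ and $M^\lf/M_0 \neq 0$. Theorem~\ref{ab} then yields $\HL^0(\lf,M) = M^\lf \neq 0$, $\HL^n(\lf,M) = M^\lf/M_0 \neq 0$ for even $n \geq 2$, and $\HL^n(\lf,M) = M^0/M\lf \cong M^\lf/M_0 \neq 0$ for odd $n$ (using the finite-dimensional dimension formula in Theorem~\ref{ab}).

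For the inductive step with $d \geq 2$, I would first select a non-zero element $z$ in the two-sided center $\{z \in \lf : xz = zx = 0 \text{ for all } x \in \lf\}$, which is non-zero for any non-zero nilpotent Leibniz algebra: combine $\leib(\lf) \subseteq $ right-annihilator of $\lf$ with an Engel argument (see \cite[Theorem~5.17]{F}) for the nilpotent left action of $\lf$ on $\leib(\lf)$ to produce $z \in \leib(\lf)$ with $\lf\cdot z = 0$, hence $z \in Z(\lf)$ (and when $\leib(\lf)=0$ the algebra $\lf$ is a nilpotent Lie algebra, whose center is classically non-zero). Setting $\bar\lf := \lf/\F z$, I obtain a nilpotent Leibniz algebra of dimension $d-1$. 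Centrality of $z$ together with (LLM) and (LML) applied to $xz = 0 = zx$ gives $[\lambda_z, \lambda_x] = 0 = [\lambda_z, \rho_x]$ for all $x \in \lf$, so $\lambda_z$ is an $\lf$-bimodule endomorphism of $M$. Fitting's lemma (Theorem~\ref{fitting}) gives $M = M_0(\lambda_z) \oplus M_1(\lambda_z)$ as $\lf$-bimodules, and on $M_1(\lambda_z)$ the operator $\lambda_z$ is invertible while $L_z = 0$ is locally nilpotent, so Theorem~\ref{vanhh} yields $\HL^n(\lf, M_1(\lambda_z)) = 0$ for $n \geq 1$. Thus $\HL^n(\lf, M) \cong \HL^n(\lf, M_0(\lambda_z))$ for $n \geq 1$. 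Moreover the hypothesis $M^\lf \neq M_0$ carries over to $M_0(\lambda_z)$: any $m \in M^\lf \cap M_1(\lambda_z)$ is of the form $\lambda_z(m')$ with $m' \in M^\lf \cap M_1(\lambda_z)$, so $m = \lambda_z(m') + \rho_z(m') \in M_0$, forcing the ``excess'' of $M^\lf$ over $M_0$ to live in $M_0(\lambda_z)$. Replacing $M$ by $M_0(\lambda_z)$, we may then assume $\lambda_z$ is nilpotent on $M$; a quick induction using (MLL) yields $\rho_z^{n+1} = (-1)^n \rho_z \lambda_z^n$, making $\rho_z$ nilpotent too, and (LML) with $x = y = z$ gives $[\lambda_z, \rho_z] = 0$.

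The remaining and hardest step is to exploit this reduction to pass to $\bar\lf$ and invoke the inductive hypothesis. A natural candidate $\bar\lf$-subbimodule of $M$ is $N := \Ker(\lambda_z) \cap \Ker(\rho_z)$: it is non-zero (Engel applied to the commuting nilpotent pair $\lambda_z, \rho_z$), it is an $\lf$-subbimodule (by Lemma~\ref{inv} together with $\lambda_z$ being a bimodule endomorphism), and becomes a $\bar\lf$-bimodule since $z$ acts trivially on it. The main obstacle is to transfer the non-vanishing of $\HL^n(\bar\lf, N)$ (obtained from the induction, once one checks that $N^{\bar\lf} \neq N_0$) back to the non-vanishing of $\HL^n(\lf, M)$: since, as the introduction emphasizes, there is no useful Hochschild-Serre-type spectral sequence available for a general central ideal in Leibniz cohomology, this transfer must be engineered directly, presumably by combining the long exact cohomology sequence associated with $0 \to N \to M \to M/N \to 0$ with iterated applications of the Fitting reduction in Theorem~\ref{fittinghh} and a careful use of the Cartan-type formulas (\ref{cartan}) and (\ref{invact}) for the central element $z$.
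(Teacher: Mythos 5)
Your base case is correct and matches the paper's (Theorem \ref{ab} plus the standing inclusion $M_0\subseteq M^\lf$), and your preliminary reductions in the inductive step are sound: a non-zero nilpotent Leibniz algebra does have a non-zero two-sided central element $z$, the Fitting decomposition $M=M_0(\lambda_z)\oplus M_1(\lambda_z)$ is a decomposition of bimodules, Theorem \ref{vanhh} kills the cohomology of $M_1(\lambda_z)$ in positive degrees, and your argument that the hypothesis $M^\lf\ne M_0$ survives the passage to $M_0(\lambda_z)$ is valid. But the proof has a genuine gap exactly where you flag it: you never produce a mechanism for transferring non-vanishing from the smaller algebra back to $\lf$. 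Saying the transfer must be ``engineered directly, presumably by combining'' the long exact sequence for $0\to N\to M\to M/N\to 0$ with Fitting reductions and Cartan formulas is not an argument, and the obstruction is real --- as the paper stresses, there is no usable Hochschild--Serre-type spectral sequence for a central ideal in Leibniz cohomology, and the long exact sequence you name relates cohomologies of $\lf$ with various coefficients, not cohomology of $\bar\lf$ to cohomology of $\lf$. So the inductive step is not closed.

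The paper's induction runs in the opposite direction and this is the essential idea you are missing: instead of quotienting by a one-dimensional central ideal, one restricts to a codimension-one ideal $\iif$ with $\lf=\iif\oplus\F x$ and uses the Leibniz analogue of Dixmier's exact sequence coming from $0\to\DL^\bullet(\lf,M)\to\CL^\bullet(\lf,M)\stackrel{\res^\bullet}{\to}\CL^\bullet(\iif,M)\to 0$. Although the relative complex $\DL^\bullet(\lf,M)$ is much larger than in the Lie case, the summand $\DL^{n+1}_1(\lf,M)=\Hom_\F(\F x\otimes\iif^{\otimes n},M)$ is identified with $\CL^n(\iif,M)$ compatibly with the coboundaries via the Cartan relation (\ref{cartan}), and the composite of the connecting homomorphism $\partial^n$ with this identification is $\res^n\circ\theta^n_x$. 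One then checks that $M^\iif\ne M_0(\iif)$ (so the induction hypothesis applies to $\iif$), reduces to $M=M_0(\lf)$ by Theorem \ref{fittinghh} and Remark 1 so that $\theta^n_x$ is nilpotent, and concludes that $\partial^n$ cannot surject onto the summand $\DL^{n+1}_1(\lf,M)$; hence the map $\sigma^{n+1}$ into $\HL^{n+1}(\lf,M)$ is non-zero. Your central-quotient strategy would additionally need a comparison of $\HL^\bullet(\lf,-)$ with $\HL^\bullet(\lf/\F z,-)$ that neither you nor the paper supplies, so as written the proposal does not prove the theorem.
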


\begin{proof}
For the proof we will use Dixmier's exact sequence (see \cite[Proposition
1]{D}), adapted to Leibniz algebras. Since $\lf$ is
nilpotent, it has an ideal $\iif$ of codimension one, and therefore $\lf
=\iif\oplus\F x$ for some element $x\in\lf\setminus\iif$. The restriction
of a Leibniz cochain of $\lf$ to the ideal $\iif$ induces a short exact
sequence of cochain complexes
$$0\to\DL^\bullet(\lf,M)\to\CL^\bullet(\lf,M)\stackrel{\res^\bullet}{\to}
\CL^\bullet(\iif,M)\to 0\,,$$
but the kernel cochain complex $\DL^\bullet(\lf,M):=\Ker(\res^\bullet)$ is
much more complicated than in the Lie algebra case because of
$$\DL^n(\lf,M)=\bigoplus_{j=1}^n\Hom_\F(\iif^{\otimes(j-1)}\otimes\F x
\otimes\iif^{\otimes(n-j)},M)\oplus\ldots\oplus\Hom_\F([\F x]^{\otimes n},M)\,,$$
where the dots indicate Hom spaces with two, three, \dots, $n-1$
occurrences of factors $\F x$ (at arbitrary places). By using Dixmier's method
and the Cartan relations for Leibniz cohomology, we can identify one summand
in the cohomology of $\DL^\bullet(\lf,M)$, namely, the summand corresponding to 
$$\DL^n_1(\lf,M):=\Hom_\F(\F x\otimes\iif^{\otimes(n-1)},M)\subseteq\CL^n
(\lf,M)\,.$$
Let us show that the isomorphism of vector spaces 
$$(-1)^n\varphi^n:\DL^n_1(\lf,M)\to\CL^{n-1}(\iif,M),\quad\varphi^n(f):=
\res^{n-1}(\iota_x^n(f))$$
(where $\iota_x^\bullet$ is the insertion operator into the first component) is
compatible with the Leibniz coboundary operator. Indeed, by identity (\ref{cartan}),
we have
$$(\varphi^{n+1}\circ\dl^n)(f)=\res^n(\iota_x^{n+1}(\dl^n f))=-\res^n(\dl^{n-1}
(i_x^n(f)))=-(\dl^{n-1}\circ\varphi^n)(f)$$
as $\res^n(\theta^n_x(f))=0$ since the first component of the cochain vanishes
on $\iif$. Note that this reasoning works for any integer $n\geq 1$ because of
the validity of the Cartan identity (\ref{cartan}) in this range. 

Furthermore, the composition of the connecting homomorphism
$$\partial^n:\HL^n(\iif,M)\to\HCE^{n+1}(\DL^\bullet(\lf,M),\dl^\bullet)$$
with $\varphi^{n+1}$ is given by $\res^n\circ\theta_x^n$. Indeed, $\partial^n$
is defined by lifting a cocycle $c\in\ZL^n(\iif,M)$ to $\tilde{c}$ in $\CL^n(\lf,M)$,
then taking $\dl^n(\tilde{c})$ and observing that the result lies in $\DL^{n+1}
(\lf,M)$. The composition with $\varphi^{n+1}$ gives thus a cocycle $c\in\ZL^n
(\iif,M)$
$$(\varphi^{n+1}\circ\partial^n)(c)=\res^n(\iota_x^{n+1}(\dl^n(\tilde{c})))
=(\res^n\circ\theta_x^n)(\tilde{c})$$
again because of the Cartan formula (\ref{cartan}) saying $\theta_x^n(\tilde{c})
=\iota_x^{n+1}(\dl^n(\tilde{c}))+\dl^{n-1}(\iota_x^n(\tilde{c}))$, where the
last term is a coboundary. 

Now let us prove the theorem. We proceed by induction on the dimension of $\lf$.
The base step follows from Theorem \ref{ab} in conjunction with the hypothesis.
Suppose therefore that for all nilpotent Leibniz algebras of dimension less than the
dimension of $\lf$ the cohomology with values in a finite-dimensional bimodule $M$
for which $M^\lf\ne M_0$ is non-zero. We apply the induction hypothesis
to an ideal $\iif$ in $\lf$ of codimension one. Note that $M^\kf\ne M_0(\kf)$ is satisfied
for every subalgebra $\kf$ of $\lf$, where $M_0(\kf):=\langle y\cdot m+m\cdot y
\mid y\in\kf,m\in M\rangle_\F$. Namely one obtains from $M^\kf=M_0(\kf)$, that
$M^\lf=M_0(\lf)$ as $M^\lf\subseteq M^\kf=M_0(\kf)\subseteq M_0(\lf)$, and the
other inclusion is always true as one can see from \cite[Lemma 3.7]{F}. Moreover,
the long exact sequence which we constructed in the beginning of the proof reads
$$\cdots\to\HL^n(\lf,M)\stackrel{\res^n}{\to}\HL^n(\iif,M)\stackrel{\partial^n}{\to}
\HCE^{n+1}(\DL^\bullet(\lf,M),\dl^\bullet)\stackrel{\sigma^{n+1}}{\to}\HL^{n+1}
(\lf,M)\to\cdots\,,$$
where the linear transformation $\sigma^n:\HCE^n(\DL^\bullet(\lf,M),\dl^\bullet)
\to\HL^n(\lf,M)$ is induced by the inclusion $\DL^n(\lf,M)\hookrightarrow\CL^n
(\lf,M)$. By the induction hypothesis and the preceding constructions, the connecting
homomorphism factors over
$$0\not=\HL^n(\iif,M)\stackrel{(\theta_x^n)_{\vert\iif}}\to\HL^n(\iif,M)\subseteq
\HCE^{n+1}(\DL^\bullet(\lf,M),\dl^\bullet)\,.$$
By virtue of Theorem \ref{fittinghh} and Remark 1, we may assume that $M=M_0
(\lf)$. Hence, $\lf$ acts locally nilpotently on $M$, and thus $x$ acts nilpotently on
$M$. From this one deduces that $\theta_x^n$ is nilpotent, and thus $\partial^n$
cannot be an isomorphism. More precisely, $\partial^n$ cannot be surjective onto
the factor $\DL^{n+1}_1(\lf,M)$. This entails that $\sigma^{n+1}\not=0$, and
therefore $\HL^{n+1}(\lf,M)\not=0$.
\end{proof}

As an immediate consequence of Theorem \ref{nonvannilp} we obtain the following result:

\begin{cor}\label{nonvantriv}
Let $\lf$ be a non-zero finite-dimensional nilpotent left Leibniz algebra. Then
$\HL^n(\lf,\F)\ne 0$ for every non-negative integer $n$.
\end{cor}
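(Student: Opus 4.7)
The plan is to apply Theorem \ref{nonvannilp} directly to the trivial $\lf$-bimodule $M := \F$, so the entire task reduces to verifying the hypothesis $M^\lf \ne M_0$ in this case.

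First, I would unpack what the trivial bimodule structure means for $M = \F$: both the left and the right actions are identically zero, so by the definition of the anti-symmetric kernel,
$$M_0 = \langle x\cdot m + m\cdot x \mid x\in\lf,\; m\in M\rangle_\F = 0.$$
On the other hand, the space of right $\lf$-invariants is
$$M^\lf = \{m\in\F \mid m\cdot x = 0 \text{ for all } x\in\lf\} = \F,$$
again because the right action is zero. Since $\lf\ne 0$ is assumed, we have not yet used anything delicate: the point is simply that $M^\lf = \F \neq 0 = M_0$, so the hypothesis $M^\lf \ne M_0$ of Theorem \ref{nonvannilp} holds.

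Theorem \ref{nonvannilp} then yields $\HL^n(\lf,\F) \ne 0$ for every positive integer $n$. It remains to handle the case $n = 0$ separately, which is immediate: by the definition of Leibniz cohomology in degree zero,
$$\HL^0(\lf,\F) = \F^\lf = \F \ne 0.$$
Since there is no serious obstacle here, I don't expect any step to be hard; the only thing to be careful about is not to overlook the $n=0$ case, which is not part of the conclusion of Theorem \ref{nonvannilp} but follows trivially from the computation of $M^\lf$ above.
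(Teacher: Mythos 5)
Your proposal is correct and matches the paper's intent exactly: the paper derives this corollary as an immediate consequence of Theorem \ref{nonvannilp} applied to the trivial bimodule $\F$, for which $M^\lf=\F\ne 0=M_0$, just as you verify. The only cosmetic point is that Theorem \ref{nonvannilp} as stated already covers $n=0$ (its conclusion is for every non-negative integer), so your separate treatment of degree zero, while harmless and correct, is not needed.
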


\noindent {\bf Remark 4.} Note that $\dim_\F\HL^n(\F e,\F)=1$ for every
non-negative integer $n$ shows that Corollary \ref{nonvantriv} (and thus
Theorem \ref{nonvannilp}) is best possible (see also Example C below for
a non-Lie Leibniz algebra with one-dimensional trivial cohomology in every
non-negative degree).
\vspace{.3cm}

Next, we apply Theorem \ref{nonvannilp} to prove that the cohomology of a
finite-dimensional nilpotent left Leibniz algebra with coefficients in the adjoint
bimodule does not vanish in any degree provided its left center is not contained
in its Leibniz kernel. Recall that 
$$C^\ell(\lf):=\{c\in\lf\mid\forall\,x\in\lf:cx=0\}$$
is the {\em left center\/} of a left Leibniz algebra $\lf$. It is well-known that
$\leib(\lf)\subseteq C^\ell(\lf)=(\lf_\ad)^\lf$ (see \cite[Proposition 2.13]{F}),
but not necessarily conversely.

\begin{cor}\label{adj}
Let $\lf$ be a finite-dimensional nilpotent left Leibniz algebra such that
$C^\ell(\lf)\ne\leib(\lf)$. Then $\HL^n(\lf,\lf_\ad)\ne 0$ for every
non-negative integer $n$.
\end{cor}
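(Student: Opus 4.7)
The plan is to apply Theorem \ref{nonvannilp} to the adjoint bimodule $M:=\lf_\ad$. Since the hypothesis $C^\ell(\lf)\ne\leib(\lf)$ forces $\lf\ne 0$, the standing assumptions of that theorem are in place, and the only thing left to establish is the inequality $M^\lf\ne M_0$.

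For the right-invariants, the paper has already recorded the identity $C^\ell(\lf)=(\lf_\ad)^\lf$, which comes from unwinding the bimodule structure $x\cdot m=xm$, $m\cdot x=mx$ on $\lf_\ad$. So the first step is simply to record $M^\lf=C^\ell(\lf)$, and then to produce the matching upper bound for $M_0$. I would bound $M_0$ above by the Leibniz kernel via polarization: for any $x,m\in\lf$,
$$x\cdot m+m\cdot x = xm+mx = (x+m)^2-x^2-m^2\in\leib(\lf),$$
so the generators of $M_0$ all lie in $\leib(\lf)$, giving $M_0\subseteq\leib(\lf)$.

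Now combining the chain $M_0\subseteq\leib(\lf)\subseteq C^\ell(\lf)=M^\lf$ with the assumption $C^\ell(\lf)\ne\leib(\lf)$, any element $c\in C^\ell(\lf)\setminus\leib(\lf)$ witnesses $M^\lf\ne M_0$. Theorem \ref{nonvannilp} then delivers $\HL^n(\lf,\lf_\ad)\ne 0$ for every non-negative integer $n$, which is the conclusion.

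There is really no hard step here; the only point one has to be careful about is that $\leib(\lf)$ is by definition spanned by the squares $x^2$ and not directly by products $xy$, so the inclusion $M_0\subseteq\leib(\lf)$ needs the short polarization computation above rather than being an immediate reading of the definitions. Everything else is packaging, and the corollary reduces cleanly to an application of the non-vanishing theorem just proved.
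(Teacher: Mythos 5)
Your proof is correct and takes essentially the same route as the paper: identify $(\lf_\ad)^\lf=C^\ell(\lf)$, bound $(\lf_\ad)_0$ above by $\leib(\lf)$, and feed the resulting strict inclusion into Theorem \ref{nonvannilp}. The only difference is that you establish $(\lf_\ad)_0\subseteq\leib(\lf)$ by the explicit polarization $xm+mx=(x+m)^2-x^2-m^2$, whereas the paper cites \cite[Example 3.11]{F} for this inclusion.
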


\begin{proof}
Note that $(\lf_\ad)^\lf=C^\ell(\lf)$, and by virtue of \cite[Example
3.11]{F}, we have $(\lf_\ad)_0\subseteq\leib(\lf)$. Consequently, we
obtain from $C^\ell(\lf)\ne\leib(\lf)$ that $(\lf_\ad)^\lf\ne(\lf_\ad)_0$,
and thus we can apply Theorem \ref{nonvannilp} to obtain the assertion.
\end{proof}


Since the center of a non-zero nilpotent Lie algebra is always non-zero,
we deduce from Corollary \ref{adj}:

\begin{cor}\label{adjlie}
If $\gf$ is a non-zero finite-dimensional nilpotent Lie algebra, then
$\HL^n(\gf,\gf_\ad)\ne 0$ for every non-negative integer $n$.
\end{cor}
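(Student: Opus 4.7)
The plan is to reduce immediately to Corollary \ref{adj} by checking its hypothesis $C^\ell(\gf)\ne\leib(\gf)$ for a non-zero finite-dimensional nilpotent Lie algebra $\gf$.

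First, since $\gf$ is a Lie algebra, we have $\leib(\gf)=0$ by the characterization recalled in Section \ref{prelim}. So the hypothesis of Corollary \ref{adj} reduces to showing that $C^\ell(\gf)\ne 0$. For a Lie algebra, the left center $C^\ell(\gf)=\{c\in\gf\mid cx=0\text{ for all }x\in\gf\}$ coincides with the usual center $Z(\gf)$ (because $xc=-cx$ in a Lie algebra), so the task is to recall that a non-zero finite-dimensional nilpotent Lie algebra has non-trivial center.

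This last fact is the classical corollary of Engel's theorem: if $\gf$ is nilpotent, then $\ad_\gf$ consists of nilpotent operators on $\gf$, so by Engel's theorem applied to the adjoint representation there exists a non-zero element $z\in\gf$ annihilated by $\ad_x$ for all $x\in\gf$, i.e.\ a non-zero element of $Z(\gf)$. Alternatively, the lower central series of $\gf$ terminates, and its last non-zero term is a non-zero central ideal.

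Putting these observations together, we obtain $C^\ell(\gf)=Z(\gf)\ne 0=\leib(\gf)$, so Corollary \ref{adj} applies and yields $\HL^n(\gf,\gf_\ad)\ne 0$ for every non-negative integer $n$. There is no real obstacle here; the corollary is a one-line consequence of Corollary \ref{adj} once one notes that the Lie condition makes $\leib(\gf)$ vanish while nilpotency forces $C^\ell(\gf)=Z(\gf)$ to be non-zero.
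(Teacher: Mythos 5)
Your proof is correct and follows exactly the paper's argument: the paper deduces the corollary from Corollary \ref{adj} by the same observation that a non-zero finite-dimensional nilpotent Lie algebra has non-zero center while $\leib(\gf)=0$ for a Lie algebra. Your extra details (identifying $C^\ell(\gf)$ with $Z(\gf)$ and invoking Engel's theorem) just spell out what the paper leaves implicit.
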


Let us illustrate the above results by some cohomology computations for the
two-dimensional nilpotent non-Lie Leibniz algebra $\nf:=\F e\oplus\F f$ with
multiplication $ee=ef=fe=0$ and $ff=e$ (see \cite[Example 2.4]{F}).
\vspace{.2cm}

\noindent{\bf Example C:} In the following we consider the cohomology of
the two-dimensional nilpotent non-Lie Leibniz algebra $\nf$ with trivial or
adjoint coefficients.

In Example C of \cite{FW}, we stated incorrectly that the higher differentials
$d_r$ for $r\geq 2$ in Pirashvili's spectral sequence (see
\cite[Corollary 3.5]{FW}) are zero. But this is not the case. Indeed, the
differential $\dl_2:E_2^{0,1}\to E_2^{2,0}$ sends $e^*\in E_2^{0,1}=
\HL^0(\nf_\lie,\leib(\nf)^*_s)\otimes\HL^1(\nf,\F)$ to $\dl_2 e^*\in
E_2^{2,0}=\HL^2(\nf_\lie,\leib(\nf)^*_s)\otimes\HL^0(\nf,\F)$ with
$(\dl_2 e^*)(f\otimes f)=-e^*(ff)=-e^*(e)=-1$. 

By computing explicitly cocycles and coboundaries, one can show
$$\dim_\F\HL^0(\nf,\F)=\dim_\F\HL^1(\nf,\F)=\dim_\F\HL^2(\nf,\F)
=\dim_\F\HL^3(\nf,\F)=1\,.$$
More precisely, we have
$$\HL^0(\nf,\F)=\langle e\rangle_\F\,,$$
$$\HL^1(\nf,\F)=\langle f^*\rangle_\F\,,$$
$$\HL^2(\nf,\F))=\langle f^*\otimes e^*\rangle_\F\,,$$
$$\HL^3(\nf,\F)=\langle f^*\otimes e^*\otimes f^*\rangle_\F\,.$$
In fact, Gnedbaye has proven in \cite[(4.2), p.\ 22]{G} that
$\dim_\F\HL^n(\nf,\F)=1$ for every non-negative integer $n$.

Let us now discuss the adjoint cohomology of $\nf$. We have that
$$\HL^0(\nf,\nf_\ad)=C^\ell(\nf)=\F e\,.$$
Moreover, by computing explicitly cocycles and coboundaries, one can show
$$\dim_\F\HL^1(\nf,\nf_\ad)=\dim_\F\HL^2(\nf,\nf_\ad)=1\,.$$
We believe that this pattern continues in higher degrees, i.e., we conjecture 
that $\dim_\F\HL^n(\nf,\nf_\ad)=1$ for every non-negative integer $n$.

Note that in the case $\ch(\F)\ne 2$ we have that  $\nf_\ad^\nf=
\F e=(\nf_\ad)_0$. Our conjecture would imply that the condition in
Theorem \ref{nonvannilp} is not necessary for the non-vanishing of
$\HL^n(\nf,\nf_\ad)$ in any positive degree $n$. This example also
shows that for Leibniz algebras the condition in Theorem \ref{dixmier}
cannot be replaced by $M^\lf=M_0$, and thus the dichotomy in the
(non-)vanishing of their cohomology as present in Dixmier's theorems
does not seem to hold for Leibniz cohomology.
\vspace{.3cm}

We say that a Leibniz algebra $\lf$ is {\em supersolvable\/} if there
exists a chain
$$\lf=\lf_k\supset\lf_{k-1}\supset\cdots\supset\lf_1\supset\lf_0=0$$
of ideals of $\lf$ such that $\dim_\F\lf_j/\lf_{j-1}=1$ for every integer
$1\le j\le k$. Note that finite-dimensional nilpotent Leibniz algebras
are supersolvable and supersolvable Leibniz algebras are solvable.
Moreover, over algebraically closed fields of characteristic zero, every
finite-dimensional solvable Leibniz algebra is supersolvable (see
\cite[Corollary 2]{P2} or \cite[Corollary 6.7]{F}). Finally, as for Lie
algebras, it is not difficult to see that subalgebras and homomorphic
images of supersolvable Leibniz algebras are again supersolvable.

Since $\lf$ is supersolvable, we have that the derived subalgebra $\lf\lf$
of $\lf$ is a nilpotent ideal of $\lf$ (see the proof of \cite[Corollary 3]{P2}).
Hence, the following result is a special case of Corollary \ref{van}:

\begin{thm}\label{vansupsolv}
Let $\lf$ be a supersolvable left Leibniz algebra. If $M$ is a finite-dimensional
$\lf$-bimodule such that $M^{\lf\lf}=0$, then $\HL^n(\lf,M)=0$ for every
non-negative integer $n$.
\end{thm}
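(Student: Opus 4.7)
The plan is to derive Theorem \ref{vansupsolv} as an immediate consequence of Corollary \ref{van} applied with $\nf := \lf\lf$. The right-$\nf$-invariance hypothesis $M^\nf = 0$ of that corollary is precisely our hypothesis $M^{\lf\lf} = 0$, so the only thing to verify is that $\lf\lf$ qualifies as a nilpotent right ideal of $\lf$ whenever $\lf$ is supersolvable.

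First, I would note that the derived subalgebra $\lf\lf$ is automatically a two-sided (in particular a right) ideal of $\lf$, so the only substantive issue is its nilpotency. This is exactly the point flagged in the sentence immediately preceding the theorem: for a supersolvable Leibniz algebra, $\lf\lf$ is nilpotent, as shown in the proof of \cite[Corollary 3]{P2}. The idea behind that fact is the standard one. The defining chain $\lf = \lf_k \supset \cdots \supset \lf_0 = 0$ of one-codimensional ideals refines into a filtration with one-dimensional subquotients on which every commutator $xy$ must act as zero by left multiplication; the Leibniz analogue of Engel's theorem (see \cite[Theorem 5.17]{F} or \cite[Theorem 7]{P1}) then upgrades this pointwise left-nilpotence on $\lf$ to genuine nilpotence of the subalgebra $\lf\lf$.

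With $\lf\lf$ confirmed as a nilpotent right ideal and $M^{\lf\lf} = 0$ by hypothesis, Corollary \ref{van} delivers $\HL^n(\lf, M) = 0$ for every non-negative integer $n$, completing the proof. There is essentially no obstacle to the argument: the only nontrivial ingredient, namely the nilpotency of $\lf\lf$ for supersolvable Leibniz algebras, is imported directly from \cite{P2}, and the reduction to Corollary \ref{van} is a one-line invocation.
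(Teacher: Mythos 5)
Your proposal is correct and coincides with the paper's own argument: the theorem is stated there as a special case of Corollary \ref{van} applied to the nilpotent ideal $\lf\lf$, whose nilpotency for supersolvable $\lf$ is likewise imported from the proof of \cite[Corollary 3]{P2}. Your additional sketch of why $\lf\lf$ is nilpotent is a harmless elaboration of that citation.
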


\noindent Similarly to the nilpotent case, Theorem \ref{vansupsolv} and
Proposition \ref{non1dim} enable us to prove a Leibniz analogue of Barnes'
vanishing theorem for supersolvable Lie algebras \cite[Theorem 3]{B1}
(see also \cite[Proposition 2.7]{FW}). More general than in Barnes' result,
we allow Leibniz bimodules that are not necessarily irreducible. Note also
that the proof of Theorem \ref{barnes} is very similar to the proof of
Theorem \ref{dixmier}.

\begin{thm}\label{barnes}
Let $\lf$ be a supersolvable left Leibniz algebra over an algebraically
closed field $\F$, and let $M$ be a finite-dimensional $\lf$-bimodule.
If no composition factor of $M$ is one-dimensional, then
\begin{eqnarray*}
\HL^n(\lf,M)\cong
\left\{
\begin{array}{cl}
M_0 & \mbox{if }n=0\\
0 & \mbox{if }n\ge 1\,.
\end{array}
\right.
\end{eqnarray*}
Moreover, if $M$ is symmetric, then $\HL^n(\lf,M)=0$ for
every non-negative integer $n$.
\end{thm}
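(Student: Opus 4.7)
The plan is to follow the structure of the proof of Theorem \ref{dixmier} very closely, making three substitutions: Proposition \ref{non1dim} in place of Proposition \ref{nontriv}, Theorem \ref{vansupsolv} in place of Theorem \ref{vannilp}, and the supersolvable chain of ideals of $\lf$ in place of the left descending central series. Note that the derived subalgebra $\lf\lf$ is a nilpotent ideal of the supersolvable algebra $\lf$ (as cited in the text before Theorem \ref{vansupsolv}), so Theorem \ref{vansupsolv} applies whenever $M^{\lf\lf}=0$.

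For $n=0$, Proposition \ref{non1dim} yields $M^{\lf\lf}=M_0$; combined with the general inclusions $M_0\subseteq M^\lf\subseteq M^{\lf\lf}$, this forces $\HL^0(\lf,M)=M^\lf=M_0$. For symmetric $M$ and all $n\ge 0$, Proposition \ref{non1dim} gives $M^{\lf\lf}=0$, and Theorem \ref{vansupsolv} then yields $\HL^n(\lf,M)=0$. For general $M$, the short exact sequence $0\to M_0\to M\to M_\sym\to 0$ (whose outer terms inherit the hypothesis on composition factors, and in which $M_0$ is anti-symmetric while $M_\sym$ is symmetric) reduces the remaining work, via the long exact cohomology sequence, to the irreducible anti-symmetric case.

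Thus the core step is to handle an irreducible anti-symmetric bimodule $M$ with $\dim_\F M>1$. As in the proof of Theorem \ref{dixmier}, I would invoke \cite[Lemma 1.4(b)]{FW} to get $\HL^n(\lf,M)\cong\HL^{n-1}(\lf,(\lf^*\otimes M)_s)$ for every $n\ge 1$. Using supersolvability, I refine $\lf_{\ad,\ell}$ into a composition series $\lf=\lf_k\supset\lf_{k-1}\supset\cdots\supset\lf_0=0$ of left $\lf$-submodules with one-dimensional quotients $V_j:=\lf_j/\lf_{j-1}$. Dualizing, tensoring with $M$, and symmetrizing yields the short exact sequences of symmetric $\lf$-bimodules
\[
0\to(V_j^*\otimes M)_s\to(\lf_j^*\otimes M)_s\to(\lf_{j-1}^*\otimes M)_s\to 0.
\]
Since $V_j^*$ is one-dimensional and $M$ is irreducible with $\dim_\F M>1$, the left $\lf$-module $V_j^*\otimes M$ is again irreducible of dimension $>1$, hence $(V_j^*\otimes M)_s$ has no one-dimensional composition factors as a bimodule. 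The already-proved symmetric case gives $\HL^n(\lf,(V_j^*\otimes M)_s)=0$ for all $n\ge 0$, and induction on $j$ via the long exact cohomology sequence then yields $\HL^{n-1}(\lf,(\lf^*\otimes M)_s)=0$, hence $\HL^n(\lf,M)=0$ for every $n\ge 1$.

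The main obstacle is the bookkeeping in the anti-symmetric step: one must verify that tensoring the composition factors of $\lf$ (one-dimensional by supersolvability) with an irreducible $M$ of dimension $>1$ preserves irreducibility and dimension bounds at the level of bimodules after symmetrization, so that the symmetric-case conclusion is available at each inductive step. Once this is in place, the long-exact-sequence and dimension-shift machinery runs exactly parallel to the proof of Theorem \ref{dixmier}.
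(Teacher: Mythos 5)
Your proposal is correct and follows essentially the same route as the paper: reduction to the symmetric case via Proposition \ref{non1dim} and Theorem \ref{vansupsolv}, then the anti-symmetric case via \cite[Lemma 1.4(b)]{FW} and the dimension-shift induction along the supersolvable chain (noting that $[(\lf_j/\lf_{j-1})^*\otimes M]_s$ is irreducible symmetric of dimension $\ne 1$), and finally the short exact sequence $0\to M_0\to M\to M_\sym\to 0$. The only cosmetic difference is at $n=0$, where the paper invokes Proposition \ref{nontriv} directly (the hypothesis that no composition factor is one-dimensional implies none is trivial), while you derive $M^\lf=M_0$ from Proposition \ref{non1dim} and the inclusions $M_0\subseteq M^\lf\subseteq M^{\lf\lf}$; both are valid.
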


\begin{proof}
For $n=0$ the assertion is just Proposition \ref{nontriv}. The proof for $n>0$
is divided into three steps. Firstly, for symmetric $\lf$-bimodules the statement
follows from Proposition \ref{non1dim} and Theorem \ref{vansupsolv}. 

Now suppose that $M$ is anti-symmetric. It is clear that subbimodules and
homomorphic images of anti-symmetric bimodules are again anti-symmetric.
By using the long exact cohomology sequence, it is therefore enough to prove
the first part of the theorem for irreducible anti-symmetric $\lf$-bimodules.
In this case we obtain from \cite[Lemma 1.4\,(b)]{FW} that
$$\HL^n(\lf,M)\cong\HL^{n-1}(\lf,\Hom_\F(\lf,M)_s)\cong\HL^{n-1}(\lf,
(\lf^*\otimes M)_s)$$
for every positive integer $n$. By definition of supersolvability, the left adjoint
$\lf$-module has a composition series
$$\lf_{\ad,\ell}=\lf_k\supset\lf_{k-1}\supset\cdots\supset\lf_1\supset\lf_0
=0$$
such that $\dim_\F\lf_j/\lf_{j-1}
=1$ for every integer $1\le j\le k$. From the short exact sequences $0\to
\lf_{j-1}\to\lf_j\to\lf_j/\lf_{j-1}\to 0$, we obtain by dualizing, tensoring each
term with $M$, and symmetrizing the short exact sequences:
$$0\to[(\lf_j/\lf_{j-1})^*\otimes M]_s\to(\lf_j^*\otimes M)_s\to(\lf_{j-1}^*
\otimes M)_s\to 0$$
for every integer $1\le j\le k$. Since $M$ is irreducible and $\dim_\F\lf_j/
\lf_{j-1}=1$, we conclude that $[(\lf_j/\lf_{j-1})^*\otimes M]_s$ is an
irreducible symmetric $\lf$-bimodule. Moreover, we have that $\dim_\F
[(\lf_j/\lf_{j-1})^*\otimes M]_s\ne 1$ as $\dim_\F M\ne 1$. Hence we
obtain inductively from the long exact cohomology sequence that $\HL^n
(\lf,M)\cong\HL^{n-1}(\lf,(\lf^*\otimes M)_s)=0$ for every positive integer
$n$.

The remainder of the proof is exactly the same as in the proof of
Theorem~\ref{dixmier}. 
\end{proof}

\noindent {\bf Remark 5.} According to Lie's theorem for Leibniz algebras,
every finite-dimen\-sional irreducible Leibniz bimodule of a finite-dimensional
solvable Leibniz algebra over an algebraically closed field of characteristic
zero is one-dimensional (see \cite[Theorem 2]{P2} or \cite[Corollary
6.5\,(a)]{F}). Consequently, in this case the hypothesis of Theorem
\ref{barnes} is never satisfied, and thus this result is only applicable
over fields of prime characteristic.
\vspace{.3cm}


Note that in addition to not knowing of a general condition that guarentees
the non-vanishing of the cohomology of a supersolvable Leibniz algebra in
all or at least in certain degrees, in many situations Theorem \ref{barnes}
cannot be applied. Let us illustrate this by some computations for the
cohomology of the two-dimensional supersolvable non-Lie Leibniz algebra
$\af=\F h\ltimes_\ell\F e$.
\vspace{.2cm}

\noindent {\bf Example D:} Let $\af=\F h\oplus\F e$ denote the
two-dimensional supersolvable non-Lie Leibniz algebra with multiplication
$he=e$ and $hh=eh=ee=0$ (see \cite[Example~2.3]{F}).

In \cite[Example D]{FW} we computed the cohomology of $\af$ with trivial
coefficients. Namely, we obtained that
$$\dim_\F\HL^n(\af,\F)=1$$
for every non-negative integer $n$.

For the adjoint cohomology of $\af$ we have that
$$\HL^0(\af,\af_\ad)=C^\ell(\af)=\F e\,.$$
Note that $\af_\ad$ has only one-dimensional composition factors, but by
computing explicitly cocycles and coboundaries, one can show that
$$\HL^1(\af,\af_\ad)=\HL^2(\af,\af_\ad)=0\,.$$

In particular, this shows that $\af$ has only inner derivations. Moreover,
it follows from \cite[Th\'eor\`eme 3]{Ba} that over an algebraically closed
field of characteristic zero $\af$ is a rigid Leibniz algebra, i.e., in this case
$\af$ has no non-trivial infinitesimal deformations.

We conjecture that the adjoint cohomology of $\af$ also vanishes in higher
degrees, namely, $\HL^n(\af,\af_\ad)=0$ for every positive integer $n$.

Let us remark that this vanishing behavior would be analogous to
\cite[Proposition~2.11]{FW}. As $\af=\F h\ltimes_\ell\F e$ is a
hemi-semidirect product and $\F h$ acts semisimply on $\F e$,
one might speculate whether the adjoint cohomology of any
hemi-semidirect product of an abelian Lie algebra that acts
semisimply on a nilpotent Lie algebra vanishes in every positive
degree.
\vspace{.3cm}

We conclude this section by generalizing Barnes' vanishing theorem for
solvable Lie algebras \cite[Theorem 2]{B1} to Leibniz algebras.

\begin{thm}\label{vansolv}
Let $\lf$ be a solvable left Leibniz algebra. If $M$ is a finite-dimensional
right faithful irreducible $\lf$-bimodule, then $\HL^n(\lf,M)=0$ for
every non-negative integer $n$.
\end{thm}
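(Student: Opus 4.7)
The plan is to deduce Theorem \ref{vansolv} as an almost immediate consequence of Corollary \ref{cohnonsemisim}, which already handles the vanishing of $\HL^n(\lf,M)$ for finite-dimensional right faithful irreducible $\lf$-bimodules whenever $\lf$ is either a non-semi-simple Lie algebra or any non-Lie Leibniz algebra. Thus it suffices to verify that every non-zero solvable left Leibniz algebra falls into one of these two classes.

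First I would observe the dichotomy: if $\leib(\lf)\neq 0$, then $\lf$ is by definition non-Lie and Corollary \ref{cohnonsemisim} applies directly, yielding $\HL^n(\lf,M)=0$ for every $n\geq 0$. If instead $\leib(\lf)=0$, then $\lf$ is a Lie algebra; being non-zero and solvable, it equals its own solvable radical, so it is not semi-simple, and Corollary \ref{cohnonsemisim} again delivers the vanishing. Combining the two cases gives the theorem.

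There is essentially no obstacle, because the substantive argument has already been carried out in Corollary \ref{cohnonsemisim}, which itself invokes Corollary \ref{van} with the abelian ideal $\leib(\lf)$ in the non-Lie case, or with a minimal abelian ideal of a non-semi-simple Lie algebra in the Lie case. The only minor caveat is the degenerate situation $\lf=0$, which must be tacitly excluded from the hypothesis of the theorem: for $\lf=0$ the trivial one-dimensional bimodule is vacuously right faithful and irreducible, yet $\HL^0(0,\F)=\F\neq 0$. Under the natural convention that $\lf\neq 0$, the reduction to Corollary \ref{cohnonsemisim} is complete and the proof is essentially a two-line unraveling of definitions.
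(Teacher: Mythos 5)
Your proposal is correct and follows essentially the same route as the paper: the paper also reduces Theorem \ref{vansolv} to Corollary \ref{cohnonsemisim} by observing that a non-zero solvable Leibniz algebra cannot be semi-simple (it argues that semi-simplicity would force $\lf=\leib(\lf)\ne 0$, which is impossible), so that $\lf$ is either a non-semi-simple Lie algebra or a non-Lie Leibniz algebra. Your explicit case split on $\leib(\lf)$ and your remark about the degenerate case $\lf=0$ are only cosmetic differences from the paper's wording.
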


\begin{proof}
Suppose that $\lf\ne 0$ is semi-simple. Since by hypothesis $\lf $ is solvable,
this implies that $0\ne\lf=\leib(\lf)$ which contradicts \cite[Proposition 2.20]{F}.
As a consequence, we have that either $\lf=0$ or $\lf$ is not semisimple.
The first case contradicts the original assumption, and in the second case
the assertion follows from Corollary \ref{cohnonsemisim}.
\end{proof}


\section{Applications}\label{appl}


In \cite{B1} Barnes uses the cohomological vanishing theorems
proved in his paper to derive several structure theorems for
finite-dimensional nilpotent and (super)solvable Lie algebras.
The result for nilpotent Lie algebras \cite[Theorem~5]{B1}
has already been generalized and extended to Leibniz algebras
by Barnes himself (see \cite[Theorem~5.5]{B2}). In order to show
the usefulness of Theorem \ref{barnes}, we prove the analogue
of \cite[Theorem 6]{B1} for Leibniz algebras and extend two
characterizations of supersolvable Lie algebras (Theorems 7
and 8 in \cite{B1}) to Leibniz algebras. Recall that the {\em Frattini
subalgebra\/} $F(\lf)$ of a Leibniz algebra $\lf$ is the intersection
of the maximal subalgebras of $\lf$ (see \cite[Section 2]{T} or
\cite[Definition~5.4]{B2}). For the convenience of the reader we
include the details of the proofs.

Contrary to solvable Leibniz algebras, extensions of supersolvable
Leibniz algebras by supersolvable Leibniz algebras are not always
supersolvable. In certain situations the following result
can be used as a substitute.

\begin{thm}\label{frattini}
Let $\lf$ be a finite-dimensional left Leibniz algebra over an algebraically
closed field $\F$. If $\iif$ is an ideal of $\lf$ such that $\iif\subseteq
F(\lf)$ and $\lf/\iif$ is supersolvable, then $\lf$ is supersolvable. 
\end{thm}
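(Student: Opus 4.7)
The plan is to proceed by induction on $\dim_\F\lf$. The base case and the case $\iif=0$ are trivial, so I may assume $\iif\ne 0$ and pick a minimal ideal $\af$ of $\lf$ contained in $\iif$. Pulling back a maximal subalgebra of $\lf/\af$ to $\lf$ gives a maximal subalgebra of $\lf$ containing $\af$, which yields the standard containment $F(\lf)/\af\subseteq F(\lf/\af)$; in particular $\iif/\af\subseteq F(\lf/\af)$. Since $(\lf/\af)/(\iif/\af)\cong\lf/\iif$ is supersolvable, the induction hypothesis applied to $\lf/\af$ yields that $\lf/\af$ is supersolvable. If $\dim_\F\af=1$, lifting a supersolvable chain of ideals of $\lf/\af$ and prepending $\af\supset 0$ produces a supersolvable chain for $\lf$, completing the induction.

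The whole argument therefore reduces to showing $\dim_\F\af=1$. First, I would use that the Frattini subalgebra of a finite-dimensional Leibniz algebra is nilpotent (\cite{T,B2}), so $\af\subseteq F(\lf)$ forces $\af$ to be nilpotent. The subspace $\af\af$ is an ideal of $\lf$ contained in $\af$; by nilpotency it is a proper subspace, and by minimality of $\af$ it must be zero. Hence $\af$ is abelian, and the adjoint action of $\lf$ on $\af$ makes $\af$ into an $\lf$-bimodule on which $\af$ itself acts trivially from both sides, i.e., an $\lf/\af$-bimodule. Moreover, any nonzero $\lf$-subbimodule $N$ of $\af$ satisfies $\lf\cdot N\subseteq N$ and $N\cdot\lf\subseteq N$, so $N$ is a two-sided ideal of $\lf$ contained in $\af$, which by minimality equals $\af$; thus $\af$ is an irreducible $\lf/\af$-bimodule.

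Now assume for contradiction that $\dim_\F\af>1$. Then no composition factor of $\af$ is one-dimensional, and Theorem \ref{barnes} applied to the supersolvable Leibniz algebra $\lf/\af$ over the algebraically closed field $\F$ yields $\HL^2(\lf/\af,\af)=0$. Since the second Leibniz cohomology classifies equivalence classes of abelian Leibniz extensions (the Leibniz analogue of the classical Lie-algebraic correspondence, cf.\ \cite{LP}), the short exact sequence
$$0\to\af\to\lf\to\lf/\af\to 0$$
splits, providing a subalgebra $\mf$ of $\lf$ with $\mf\cap\af=0$ and $\mf+\af=\lf$. As $\af\ne 0$, $\mf$ is a proper subalgebra of $\lf$ and hence is contained in some maximal subalgebra $\nf$ of $\lf$. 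Combining $\mf\subseteq\nf$ with $\af\subseteq\iif\subseteq F(\lf)\subseteq\nf$ gives $\lf=\mf+\af\subseteq\nf$, a contradiction. This forces $\dim_\F\af=1$ and closes the induction.

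The main obstacle I anticipate is making sure that the two external ingredients used above are available in the required generality over an arbitrary algebraically closed field in the Leibniz setting: nilpotency of the Frattini subalgebra of a finite-dimensional Leibniz algebra, and the interpretation of $\HL^2(\lf/\af,\af)$ as classifying the abelian Leibniz extensions of $\lf/\af$ by the bimodule $\af$. Both are well known in the Leibniz literature, but the precise references should be selected with care, without a characteristic assumption; once they are in hand, all remaining steps are as above.
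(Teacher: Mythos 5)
Your proof is correct and follows essentially the same route as the paper: minimal ideal $\af\subseteq\iif$, nilpotency of the Frattini subalgebra to make $\af$ abelian and hence an irreducible $(\lf/\af)$-bimodule, induction on the quotient, Theorem \ref{barnes} to split the extension when $\dim_\F\af>1$, and the non-generator property of $F(\lf)$ to reach a contradiction (your maximal-subalgebra argument is exactly the proof of the lemma of Towers that the paper cites). The two external ingredients you flag are supplied in the paper by \cite[Corollary 5.6]{B2} for the nilpotency of $F(\lf)$ and by \cite[Section 1.7]{LP} or \cite[Theorem 1.3.13]{C} for the interpretation of $\HL^2$ as classifying abelian extensions, neither of which requires a characteristic assumption.
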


\begin{proof}
If $\iif=0$, then the assertion is trivial. So suppose that $\iif\ne 0$
and proceed by induction on the dimension of $\lf$. For the base
step there is nothing to prove. Now choose a non-zero ideal $\af$
of $\lf$ of minimal dimension that is contained in $\iif$. It follows
from $\af\subseteq\iif\subseteq F(\lf)$ and \cite[Corollary 5.6]{B2}
that $\af$ is nilpotent. But since $\af$ has minimal dimension, we
then obtain that $\af$ is abelian, and therefore $\af$ is an irreducible
$\lf/\af$-module. From $\af\subseteq\iif\subseteq F(\lf)$ and
\cite[Proposition 4.3\,(ii)]{T} we deduce that $\iif/\af\subseteq
F(\lf)/\af=F(\lf/\af)$. On the other hand, we have that
$(\lf/\af)/(\iif/\af)\cong\lf/\iif$ is supersolvable,
and thus the induction hypothesis yields that $\lf/\af$ is also
supersolvable.

Suppose now that $\dim_\F\af>1$. In this case we conclude from
Theorem \ref{barnes} that the extension of $\af$ by $\lf/\af$ splits
(see \cite[Section 1.7]{LP} or \cite[Theorem 1.3.13]{C}), and thus
there exists a subalgebra $\hf$ of $\lf$ such that $\lf=\af\oplus\hf$.
But then it follows from $\af\subseteq\iif\subseteq F(\lf)$ that $\lf=
F(\lf)+\hf$, and we conclude from \cite[Lemma~2.1]{T} that
$\lf=\hf$. Consequently, we have that $\af=\af\cap\lf=\af\cap
\hf=0$, which is a contradiction. Hence, we obtain that $\af$ is
one-dimensional.

Since $\lf/\af$ is supersolvable, there exists a chain of ideals
$$\lf/\af=L_k\supset L_{k-1}\supset\cdots\supset L_1\supset L_0=0$$
such that $\dim_\F L_j/L_{j-1}=1$ for every integer $1\le j\le k$.
Then there exists ideals $\lf_j$ of $\lf$ such that $\af\subseteq\lf_j$
and $L_j=\lf_{j+1}/\af$ for every integer $0\le j\le k$. Finally,
the chain of ideals
$$\lf=\lf_{k+1}\supset\lf_k\supset\cdots\supset\lf_2\supset\lf_1=
\af\supset\lf_0=0$$
with $\dim_\F\lf_{j+1}/\lf_j=\dim_\F L_j/L_{j-1}=1$ for every integer
$1\le j\le k$ shows that $\lf$ is supersolvable.
\end{proof}

As an application of Theorem \ref{frattini} we obtain the following
characterization of supersolvable Leibniz algebras in terms of their
maximal subalgebras:

\begin{cor}\label{max}
Let $\lf$ be a finite-dimensional left Leibniz algebra over an
algebraically closed field $\F$. Then $\lf$ is supersolvable if,
and only if, $\lf$ is solvable and every maximal subalgebra
of $\lf$ has codimension one.
\end{cor}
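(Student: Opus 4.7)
The plan is to prove both implications, with the harder one (solvability plus the codimension-one hypothesis $\Longrightarrow$ supersolvability) proceeding by induction on $\dim_\F \lf$ and mirroring the splitting argument used in the proof of Theorem \ref{frattini}.

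For the forward direction, assume $\lf$ is supersolvable and fix a defining chain of ideals $\lf = \lf_k \supset \lf_{k-1} \supset \cdots \supset \lf_0 = 0$ with $\dim_\F \lf_j/\lf_{j-1} = 1$. Given a maximal subalgebra $\mf$, let $j$ be the largest index with $\lf_j \not\subseteq \mf$; then $\lf_{j-1} \subseteq \mf$, and $\mf + \lf_j$ is a subalgebra properly containing $\mf$, hence equals $\lf$. Since $\lf_{j-1} \subseteq \lf_j \cap \mf \subsetneq \lf_j$ and $\dim_\F \lf_j/\lf_{j-1} = 1$, we get $\lf_j \cap \mf = \lf_{j-1}$ and $\dim_\F \lf/\mf = \dim_\F \lf_j/(\lf_j \cap \mf) = 1$. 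Solvability of $\lf$ is automatic from supersolvability.

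For the converse, induct on $\dim_\F \lf$, the case $\dim_\F \lf \le 1$ being trivial. Pick a minimal ideal $\af$ of $\lf$; since $\lf$ is solvable, $\af$ is abelian (the ideal $\af\af$ of $\lf$ is strictly contained in $\af$, hence zero by minimality). The hypothesis is inherited by $\lf/\af$: it is solvable, and maximal subalgebras of $\lf/\af$ correspond to maximal subalgebras of $\lf$ containing $\af$, which all have codimension one. By the induction hypothesis, $\lf/\af$ is supersolvable. Once $\dim_\F \af = 1$ is established, concatenating a supersolvable chain of $\lf/\af$ (lifted to $\lf$) with the chain $\af \supset 0$ yields a supersolvable chain for $\lf$, completing the induction.

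The main obstacle is thus to rule out $\dim_\F \af > 1$. Assume for contradiction that $\dim_\F \af > 1$. As a minimal ideal, $\af$ is irreducible as an $\lf$-bimodule and, because $\af$ is abelian, the bimodule structure factors through $\lf/\af$; hence $\af$ is an irreducible $\lf/\af$-bimodule with no one-dimensional composition factor. Applying Theorem \ref{barnes} to the supersolvable algebra $\lf/\af$ with coefficients $\af$ gives $\HL^2(\lf/\af,\af) = 0$, so the bimodule extension $0 \to \af \to \lf \to \lf/\af \to 0$ splits (see \cite[Section 1.7]{LP} or \cite[Theorem 1.3.13]{C}). Write $\lf = \af \oplus \hf$ with $\hf$ a subalgebra. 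Any subalgebra $\kf$ with $\hf \subsetneq \kf \subseteq \lf$ satisfies $\kf = (\kf \cap \af) \oplus \hf$, and $\kf \cap \af$ is readily checked to be an $\hf$-subbimodule of $\af$ (since $\af$ is an ideal and $\kf$ is a subalgebra containing $\hf$); irreducibility forces $\kf \cap \af = \af$, hence $\kf = \lf$. Therefore $\hf$ is a maximal subalgebra of $\lf$ of codimension $\dim_\F \af > 1$, contradicting the hypothesis. This forces $\dim_\F \af = 1$, closing the induction.
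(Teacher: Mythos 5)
Your proof is correct, but it takes a genuinely different route from the paper in both directions, so let me compare. For the forward implication the paper inducts on $\dim_\F\lf$ using a minimal ideal, whereas you argue directly from a supersolvable chain by locating the first ideal $\lf_j$ not contained in $\mf$ and computing $\dim_\F\lf/\mf=\dim_\F\lf_j/(\lf_j\cap\mf)=1$; this is cleaner and avoids the induction entirely (note the one slip: you want the \emph{smallest} index $j$ with $\lf_j\not\subseteq\mf$, not the largest, which would always be $k$ -- the rest of your sentence shows this is what you meant). For the converse the paper distinguishes the cases $\af\subseteq F(\lf)$ and $\af\not\subseteq F(\lf)$: in the first it invokes Theorem \ref{frattini} (whose proof contains the splitting argument via Theorem \ref{barnes} together with Towers' Frattini machinery), and in the second it produces a complemented maximal subalgebra directly. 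You bypass the Frattini subalgebra altogether: assuming $\dim_\F\af>1$, you apply Theorem \ref{barnes} to the irreducible $\lf/\af$-bimodule $\af$ to split the extension as $\lf=\af\oplus\hf$, and then verify that $\hf$ is maximal (since $\kf\cap\af$ is an $\hf$-subbimodule of the irreducible bimodule $\af$ for any intermediate subalgebra $\kf$), contradicting the codimension-one hypothesis. The cohomological input is the same in both proofs, but your deployment of it is more self-contained -- it needs neither \cite[Proposition 4.3\,(ii)]{T} nor \cite[Lemma 2.1]{T} nor Theorem \ref{frattini} itself -- at the modest cost of re-deriving the splitting and maximality of the complement inside this proof rather than quoting them.
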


\begin{proof}
Suppose that $\lf$ is supersolvable. We proceed by induction on
$\dim_\F\lf$. Let $\mf$ be a maximal subalgebra. Choose a minimal
ideal $\af$ of $\lf$. If $\mf\supseteq\af$, then $\mf/\af$ is a maximal
subalgebra of $\lf/\af$, and therefore it follows from the induction
hypothesis that $\dim_\F(\lf/\af)/(\mf/\af)=1$. Hence, we obtain
that $\lf/\mf\cong(\lf/\af)/(\mf/\af)$ is one-dimensional. On the
other hand, if $\mf\not\supseteq\af$, then $\lf=\mf+\af$. Since
$\lf$ is supersolvable, $\af$ is one-dimensional, and thus $\mf
\cap\af=0$. Consequently, we obtain that $\lf=\mf\oplus\af$,
which implies that $\dim_\F\lf/\mf=\dim_\F\af=1$.

Conversely, suppose that $\lf$ is solvable and every maximal
subalgebra of $\lf$ has codimension one. We again proceed
by induction on $\dim_\F\lf$. Let $\af$ be a minimal ideal of
$\lf$. Then it follows from the induction hypothesis that $\lf/
\af$ is supersolvable. If $\af\subseteq F(\lf)$, we conclude
from Theorem \ref{frattini} that $\lf$ is supersolvable.
Otherwise, if $\af\not\subseteq F(\lf)$, then there exists a
maximal subalgebra $\mf$ that does not contain $\af$, and
therefore $\lf=\mf+\af$. Since by hypothesis $\lf$ is solvable,
$\af$ is abelian. Hence, $\mf\cap\af$ is an ideal of $\lf$, and
thus $\af\cap\mf=0$. Consequently, we obtain that $\lf=\mf
\oplus\af$, which yields $\dim_\F\af=\dim_\F\lf/\mf=1$.
The rest of the proof is then exactly the same as in the proof
of Theorem \ref{frattini}.  
\end{proof}

\noindent {\bf Remark 6:} After finishing our paper we became
aware of the paper \cite{B3}. In Corollary 3.10 of this paper
Barnes proves Corollary \ref{max} for arbitrary fields by using
the theory of formations and projectors.
\vspace{.3cm} 

As a consequence of Corollary \ref{max}, we can deduce the
following lattice-theoretic characterization of supersolvable
Leibniz algebras:

\begin{cor}\label{maxchain}
Let $\lf$ be a finite-dimensional left Leibniz algebra over an
algebraically closed field $\F$. Then $\lf$ is supersolvable
if, and only if, $\lf$ is solvable and all maximal chains of
subalgebras of $\lf$ have the same length.
\end{cor}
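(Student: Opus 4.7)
The plan is to reduce both implications to Corollary~\ref{max}, exploiting the fact that in an abelian Leibniz algebra every subspace is a subalgebra.

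For the forward direction, I would assume $\lf$ is supersolvable, note that $\lf$ is then solvable (as remarked before Theorem~\ref{vansupsolv}), and show by induction on $\dim_\F\lf$ that every maximal chain $\lf=\lf_k\supset\lf_{k-1}\supset\cdots\supset\lf_0=0$ has length $k=\dim_\F\lf$. Each $\lf_i$ inherits supersolvability, so $\lf_{i-1}$ is a maximal subalgebra of the supersolvable algebra $\lf_i$, and Corollary~\ref{max} forces $\dim_\F\lf_i/\lf_{i-1}=1$ for every $i$.

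For the converse, assume $\lf$ is solvable with common maximal-chain length $n$. By Corollary~\ref{max} it suffices to prove that every maximal subalgebra of $\lf$ has codimension one, and I would proceed by induction on $\dim_\F\lf$, first establishing that $n=\dim_\F\lf$. Choose a minimal ideal $\af$ of $\lf$; since $\lf$ is solvable, $\af$ is abelian, so every subspace of $\af$ is a subalgebra of $\lf$, and maximal chains of subalgebras in $\af$ all have length $\dim_\F\af$. The correspondence between subalgebras of $\lf/\af$ and subalgebras of $\lf$ containing $\af$ shows that every maximal chain in $\lf/\af$ extends, by juxtaposing a maximal chain in $\af$, to a maximal chain in $\lf$. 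Hence all maximal chains in $\lf/\af$ have length $n-\dim_\F\af$. Since $\lf/\af$ is solvable of strictly smaller dimension, the induction hypothesis gives that $\lf/\af$ is supersolvable, and the forward direction then yields $n-\dim_\F\af=\dim_\F\lf/\af$, i.e., $n=\dim_\F\lf$.

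To finish, I would let $\mf$ be any maximal subalgebra of $\lf$. Any maximal chain in $\mf$ extends, by adjoining $\lf$ on top, to a maximal chain in $\lf$, so all maximal chains in $\mf$ share the common length $n-1$. Since $\mf$ is solvable and of smaller dimension, the induction hypothesis yields that $\mf$ is supersolvable, and then the forward direction forces $\dim_\F\mf=n-1=\dim_\F\lf-1$. Thus $\mf$ has codimension one in $\lf$, and a last appeal to Corollary~\ref{max} completes the argument. The main obstacle I anticipate is the bookkeeping needed to verify that the equal-length-chain hypothesis is genuinely inherited both by $\lf/\af$ and by every maximal subalgebra $\mf$; once these two inheritances are in place, the two applications of Corollary~\ref{max} close the proof.
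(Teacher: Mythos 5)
Your proof is correct, and the forward direction is essentially the paper's: subalgebras occurring in a maximal chain inherit supersolvability, so Corollary \ref{max} forces each step to have codimension one. The converse, however, is organized quite differently. The paper avoids induction on $\dim_\F\lf$ altogether: it first exhibits one particular maximal chain $\lf=\hf_d\supset\cdots\supset\hf_0=0$ in which each $\hf_{i-1}$ is a maximal \emph{ideal} of $\hf_i$, and observes that solvability forces every quotient $\hf_i/\hf_{i-1}$ to be one-dimensional (a solvable algebra with no proper non-zero ideals is abelian, hence of dimension one); this single chain pins the common length down to $\dim_\F\lf$. Then, given a maximal subalgebra $\mf$, it simply extends a maximal chain of $\mf$ upward by $\lf$ and downward arbitrarily, and the count $r=\dim_\F\lf$ forces every step, in particular $\lf/\mf$, to be one-dimensional. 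You instead run an induction on $\dim_\F\lf$, passing to $\lf/\af$ for a minimal ideal $\af$ and to each maximal subalgebra $\mf$, and you must verify twice that the equal-length hypothesis is inherited (by juxtaposing chains through $\af$, and by capping chains of $\mf$ with $\lf$) and invoke the forward implication twice inside the induction. Both of your inheritance arguments are sound --- the juxtaposed chain through $\af$ is indeed maximal because $\af$ is abelian and subalgebras between two members containing $\af$ must themselves contain $\af$ --- so the proof goes through; it is just longer than necessary. The paper's maximal-ideal trick buys a direct, induction-free identification of the common length, while your route has the mild virtue of isolating explicitly which subquotients inherit the chain condition.
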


\begin{proof}
Suppose that $\lf$ is supersolvable. According to Corollary
\ref{max}, every subalgebra in a maximal chain of subalgebras
of $\lf$ has codimension one, and therefore the length of such
a chain is $\dim_\F\lf$.

Conversely, suppose that $\lf$ is solvable and all maximal chains
of subalgebras of $\lf$ have the same length. Then there exists a
chain
$$\lf=\hf_d\supset\hf_{d-1}\supset\cdots\supset\hf_1\supset\hf_0
=0$$
of subalgebras of $\lf$ such that $\hf_{i-1}$ is a maximal ideal of
$\hf_i$ (but not necessarily an ideal of $\lf$). Since $\lf$ is solvable,
$\dim_\F\hf_i/\hf_{i-1}=1$ for every integer $1\le i\le d$. In particular,
this chain of subalgebras has length $\dim_\F\lf$, and therefore by
hypothesis every maximal chain of subalgebras of $\lf$ has length
$\dim_\F\lf$.

Now let $\mf$ be a maximal subalgebra of $\lf$. By successively
choosing maximal subalgebras we obtain a chain
$$\lf=\mf_r\supset\mf=\mf_{r-1}\supset\cdots\supset\mf_1\supset
\mf_0=0$$
of subalgebras of $\lf$ such that $\mf_{j-1}$ is maximal in $\mf_j$
for every integer $1\le j\le r$. This chain is clearly maximal, and thus
we have that $r=\dim_\F\lf$, or equivalently, $\dim_\F\mf_j/\mf_{j-1}
=1$ for every integer $1\le j\le r$. In particular, we obtain that $\dim_\F
\lf/\mf=1$, and then the assertion follows from Corollary \ref{max}.
\end{proof}

\noindent {\bf Remark 7:} After finishing our paper we became
aware of the paper \cite{ST}. The equivalence of the statements
(ii) and (iii) in Proposition 5.1 of this paper is closely related to
our Corollary \ref{maxchain}.
\vspace{.3cm} 

Let $\lf$ be a Leibniz algebra, and let $S$ be any subset of $\lf$. Then
$$C_\lf^r(S):=\{x\in\lf\mid\forall\,s\in S:sx=0\}$$
denotes the {\em right centralizer\/} of $S$ in $\lf$. We conclude this
section by extending \cite[Theorem 4]{B1} from Lie algebras to Leibniz
algebras. If $D$ is a derivation of an algebra $\rf$ over the real or complex
numbers, then
$$\exp(D):=\id_\rf+D+\frac{1}{2}D^2+\frac{1}{3!}D^3+\cdots$$
is an automorphism of $\rf$. The same is true for a nilpotent derivation
$D$ of an algebra $\rf$ over an arbitrary field of characteristic zero
(see \cite[Section 2.3]{H}). Note that we do not need to assume that
the characteristic of the ground field of $\rf$ is zero if $D^2=0$. We
say that two subalgebras $\kf$ and $\hf$ of a left Leibniz algebra $\lf$
are {\em conjugate\/} if there exists an element $x\in\lf$ such that
$\exp(L_x)(\kf)=\hf$, where $L_x$ denotes the left multiplication
operator of $x$ on $\lf$ satisfying the appropriate nilpotency condition
depending on the ground field of $\lf$.

Note that the Leibniz algebra in the next result does not have to be finite
dimensional as in Barnes' result \cite[Theorem 4]{B1}.

\begin{thm}\label{splitsolv}
Let $\lf$ be a solvable left Leibniz algebra. If $\af$ is a finite-dimensional
minimal ideal of $\lf$ such that $C_\lf^r(\af)=\af$, then every extension
of $\af$ by $\lf/\af$ splits and all complements of $\af$ in
$\lf/\af$ are conjugate.
\end{thm}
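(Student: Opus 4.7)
The plan is to derive both assertions from the vanishing of $\HL^\bullet(\lf/\af,\af)$ via Theorem \ref{vansolv}. First I would argue that $\af$ is abelian: since $\lf$ is solvable so is $\af$, hence $\af\af\ne\af$. A short calculation using (LLM), (LML), and (MLL) shows that $\af\af$ is an ideal of $\lf$ properly contained in $\af$, so by minimality $\af\af=0$. Consequently, $\af$ acts trivially on itself from both sides, and the $\lf$-bimodule structure on $\af$ induced by the multiplication in $\lf$ factors through the quotient $\lf/\af$.

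Next I would check that $\af$ is a right faithful irreducible $\lf/\af$-bimodule. Any $\lf$-subbimodule of $\af$ is an ideal of $\lf$ contained in $\af$, so by minimality only $0$ and $\af$ occur, giving irreducibility. For right faithfulness, the right annihilator of $\af$ in $\lf$ is precisely $C_\lf^r(\af)$, which equals $\af$ by hypothesis, so its image in $\lf/\af$ vanishes. Since $\lf/\af$ is a solvable left Leibniz algebra, Theorem \ref{vansolv} now yields $\HL^n(\lf/\af,\af)=0$ for every non-negative integer $n$, in particular for $n=1$ and $n=2$.

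Finally, the classification of abelian extensions of Leibniz algebras (see \cite[Section 1.7]{LP} or \cite[Theorem 1.3.13]{C}) identifies the equivalence classes of extensions of $\lf/\af$ by $\af$ with $\HL^2(\lf/\af,\af)$, so the vanishing of the latter forces the extension $0\to\af\to\lf\to\lf/\af\to 0$ to split. Fixing one complement $\hf_0\subseteq\lf$, any other complement $\hf$ is the image of a section of the form $h\mapsto h+\varphi(h)$ for a unique linear map $\varphi\colon\hf_0\to\af$; the condition that $\hf$ be a subalgebra translates into a 1-cocycle condition for $\varphi$, viewed as an $\af$-valued 1-cochain on $\lf/\af$, and two complements are conjugate via $\exp(L_a)$ with $a\in\af$ (which is well defined in any characteristic since $L_a^2=0$, as $\af$ is an abelian ideal) exactly when the corresponding 1-cocycles differ by a coboundary. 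Since $\HL^1(\lf/\af,\af)=0$, every such $\varphi$ is a coboundary, so all complements are conjugate to $\hf_0$, hence to one another.

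The main obstacle is the last paragraph: carefully setting up the bijection between conjugacy classes of complements of $\af$ under the inner automorphisms $\exp(L_a)$, $a\in\af$, and the group $\HL^1(\lf/\af,\af)$. The splitting part is a standard consequence of $\HL^2=0$ once abelian extensions have been classified, but the conjugacy part requires verifying that the 1-cocycle and 1-coboundary conditions for Leibniz cohomology match the natural action of $\exp(L_\af)$ on the set of complements; in particular, one must check that the identities (LML) and (MLL), which govern the right $\af$-action, do not introduce extra obstructions beyond those captured by $\HL^1(\lf/\af,\af)$.
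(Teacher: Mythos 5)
Your proposal is correct and follows essentially the same route as the paper: show $\af$ is abelian and hence a right faithful irreducible $\lf/\af$-bimodule, invoke Theorem \ref{vansolv} to get $\HL^2(\lf/\af,\af)=0$ for the splitting and $\HL^1(\lf/\af,\af)=0$ for the conjugacy, the latter by realizing the difference of two complements as a $1$-cocycle (the paper writes it as an explicit derivation $D(x+\af)=k-k'$) which must then be inner, giving the conjugating automorphism $\id_\lf+L_a$ with $L_a^2=0$. The verification you flag as the "main obstacle" — that the subalgebra condition yields a Leibniz $1$-cocycle and that coboundaries correspond to conjugation by $\exp(L_a)$ — is exactly the computation the paper carries out in detail, and it goes through without additional obstructions from (LML) and (MLL).
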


\begin{proof}
Since every minimal ideal of a solvable Leibniz algebra is abelian, $\af$ is
abelian, and therefore $\af$ is an $\lf/\af$-bimodule via the action induced
by left and right multiplication on $\lf$. The hypothesis that $\af$ is right
self-centralizing implies that $\af$ is a right faithful $\lf$-bimodule. Finally,
we obtain from the minimality of $\af$ that $\af$ is an irreducible $\lf/
\af$-bimodule.
Hence, it follows from Theorem \ref{vansolv} that $\HL^2(\lf/\af,\af)=0$,
and therefore every extension of $\af$ by $\lf/\af$ splits (see \cite[Section~1.7]{LP}
and \cite[Theorem 1.3.13]{C}).

Now let $\kf$ and $\kf^\prime$ be two complements of $\af$ in $\lf/\af$,
i.e., $\kf$ and $\kf^\prime$ are subalgebras of $\lf$ such that $\lf=\af
\oplus\kf$ and $\lf=\af\oplus\kf^\prime$, respectively. Let $x\in\lf$ be
arbitrary. Then $x=a+k$ for some uniquely determined elements $a\in
\af$ and $k\in\kf$. Similarly, $x=a^\prime+k^\prime$ for some uniquely
determined elements $a^\prime\in\af$ and $k^\prime\in\kf^\prime$.
Then the linear transformation $D:\lf/\af\to\af$, $x+\af\mapsto k-k^\prime$
is well-defined. Namely, note that $k$ and $k^\prime$ do not change if $x$
is replaced by $x+a_0$ for some $a_0\in\af$. Moreover, let $\pi:\lf\to\lf/\af$
denote the natural epimorphism of Leibniz algebras. The computation $\pi
(k-k^\prime)=\pi(k)-\pi(k^\prime)=\pi(x)-\pi(x)=0$ shows that $k-k^\prime
\in\Ker(\pi)=\af$.

Next, we prove that $D$ is a derivation. For any two elements $x,y\in\lf$
there exist unique elements $k_x,k_y\in\kf$ and $a_x,a_y\in\af$ such that
$x=a_x+k_x$ and $y=a_y+k_y$. Since $\af$ is abelian, we have that
$$xy=(a_x+k_x)(a_y+k_y)=a_xk_y+k_xa_y+k_xk_y\,.$$
Similarly, there exist unique elements $k_x^\prime,k_y^\prime\in\kf^\prime$
and $a_x^\prime,a_y^\prime\in\af$ such that $x=a_x^\prime+k_x^\prime$
and $y=a_y^\prime+k_y^\prime$, and we obtain that
$$xy=(a_x^\prime+k_x^\prime)(a_y^\prime+k_y^\prime)=a_x^\prime
k_y^\prime+k_x^\prime a_y^\prime+k_x^\prime k_y^\prime\,.$$
From this we conclude that $D(xy+\af)=k_xk_y-k_x^\prime k_y^\prime$,
and we compute that
\begin{eqnarray*}
D[(x+\af)(y+\af)] & = & D(xy+\af)=k_xk_y-k_x^\prime k_y^\prime\\
& = & (k_x-k_x^\prime)k_y+k_x^\prime(k_y-k_y^\prime)\\
& = & (k_x-k_x^\prime)\cdot(k_y+\af)+(k_x^\prime+\af)\cdot(k_y-k_y^\prime)\\
& = & (k_x-k_x^\prime)\cdot(y+\af)+(x+\af)\cdot(k_y-k_y^\prime)\\
&= & D(x+\af)\cdot(y+\af)+(x+\af)\cdot D(y+\af)\,.
\end{eqnarray*}
It follows from Theorem \ref{vansolv} that $\HL^1(\lf/\af,\af)=0$, and thus
we obtain from \cite[Proposition 4.3]{F} that there exists an element $a\in\af$
such that $D(x+\af)=-ax$ for any element $x\in\lf$. Since $\af$ is abelian,
we have that $L_a^2=0$, and therefore $\sigma:=\id_K+L_a$ defines an
automorphism of $\lf$ (see the argument in \cite[Section~2.3]{H}).

For any element $k\in\kf$ we have that $k=0+k=a^\prime+k^\prime$ for some
elements $a^\prime\in\af$ and $k^\prime\in\kf^\prime$. In particular, we obtain
that $D(k+\af)=k-k^\prime$, and thus
$$\sigma(k)=k+ak=k-D(k+\af)=k-(k-k^\prime)=k^\prime\in\kf^\prime\,,$$
which shows that $\sigma(\kf)\subseteq\kf^\prime$. But as an automorphism
$\sigma$ is injective. Hence, the restriction $\sigma_{\vert\kf}$ of $\sigma$ to
$\kf$ is also injective. Now it follows from $\dim_\F\kf=\dim_\F\lf/\af=\dim_\F
\kf^\prime$ that $\sigma_{\vert\kf}:\kf\to\kf^\prime$ is surjective, i.e., $\sigma
(\kf)=\kf^\prime$.
\end{proof}


\noindent {\bf Acknowledgments.} Most of this paper was written during a
visit of the second author to the University of South Alabama in May 2022.
Both authors are very grateful to the University of South Alabama and the
Universit\'e de Nantes for the financial support.



\end{document}